\documentclass[a4paper]{amsart}

\usepackage{dsfont}
\usepackage{amsmath, amsthm, amssymb}
\usepackage{xypic}
\usepackage{graphicx}
\usepackage{color}
\usepackage{mathtools}
\usepackage[usenames,dvipsnames]{xcolor}
\usepackage[latin1]{inputenc}
\usepackage{indentfirst}
\usepackage{epstopdf}
\usepackage{subfigure}
\usepackage{emptypage}
\usepackage{hyperref}
\hypersetup{
    colorlinks=true, 
    linktoc=all,     
    linkcolor=blue,  
    citecolor=JungleGreen,
}

\newtheorem{theorem}{Theorem}

\newtheorem*{lemmacontactinflating}{Lemma \ref{lem:contactinflating}}
\newtheorem*{propositionfolding}{Proposition \ref{prop:folding}}
\newtheorem{corollary}[theorem]{Corollary}

\newtheorem{proposition}[theorem]{Proposition}
\newtheorem{lemma}[theorem]{Lemma}
\newtheorem{definition}[theorem]{Definition}
\newtheorem{remark}[theorem]{Remark}

\newtheorem{theorem*}{Theorem}
\newtheorem{question*}[theorem*]{Question}
\newtheorem{conjecture*}[theorem*]{Conjecture}
\newtheorem{corollary*}[theorem*]{Corollary}
\newtheorem{theorem*e}{Teorema}
\newtheorem{question*e}[theorem*e]{Pregunta}
\newtheorem{conjecture*e}[theorem*e]{Conjetura}
\newtheorem{corollary*e}[theorem*e]{Corolario}

\renewcommand{\ker}{\mathrm{ker}}

\newcommand{\interior}{\mathrm{int}}

\newcommand{\id}{\mathrm{id}}

\newcommand{\supp}{\mathrm{supp}}

\newcommand{\R}{\mathds{R}}
\newcommand{\Z}{\mathds{Z}}
\newcommand{\N}{\mathds{N}}

\newcommand{\Q}{\mathds{Q}}
\newcommand{\T}{\mathds{T}}
\newcommand{\diff}{\mathrm{Diff}}
\newcommand{\symp}{\mathrm{Symp}}
\newcommand{\cont}{\mathrm{Cont}}
\newcommand{\ham}{\mathrm{Ham}}
\newcommand{\G}{\mathcal{G}}
\newcommand{\A}{\mathcal{A}}
\newcommand{\C}{\mathcal{C}}
\newcommand{\M}{\mathcal{M}}
\newcommand{\E}{\mathcal{E}}
\newcommand{\x}{\textbf{x}}
\newcommand{\y}{\textbf{y}}
\newcommand{\0}{\textbf{0}}

\begin{document}

\title{The conjugation method in symplectic dynamics}

\author{Luis Hern\'{a}ndez--Corbato}
\address{Instituto de Ciencias Matematicas CSIC--UAM--UCM--UC3M, C. Nicol\'{a}s Cabrera, 13--15, 28049, Madrid, Spain}
\email{luishcorbato@mat.ucm.es}

\author{Francisco Presas}
\address{Instituto de Ciencias Matematicas CSIC--UAM--UCM--UC3M, C. Nicol\'{a}s Cabrera, 13--15, 28049, Madrid, Spain}
\email{fpresas@icmat.es}

\keywords{Contactomorphism, symplectomorphism, minimal, uniquely ergodic.}

\subjclass[2010]{Primary 37J10. Secondary: 37C40, 37J55.}

\begin{abstract}
We prove the existence of minimal symplectomorphisms and strictly ergodic contactomorphisms on manifolds
which admit a locally free $\mathbb{S}^1$--action by symplectomorphisms and contactomorphisms, respectively.
The proof adapts
the conjugation method, introduced by Anosov and Katok, to the contact
and symplectic setting.
\end{abstract}

\maketitle

\section{Introduction}

Symplectic dynamics has been an intense research area for the last fifty or sixty years.
It all started with Poincar\'{e}--Birkhoff Theorem in the first decades of last century.
Arnold conjecture and related questions were pivotal to the revolution in this field, though.
The existence of extra fixed points for Hamiltonian symplectomorphisms, statement of Arnold conjecture, has
been proved in most of the instances.
The Hamiltonian flows also tend to have special dynamical properties. For instance, the existence of periodic orbits
on a dense set of energy levels (see Hofer and Zehnder \cite{hoferzehnder}).
There is another instance of Hamiltonian rigidity, that is the Conley conjecture (proven in several cases,
see \cite{ginzburggurel}). The conjecture states that a Hamiltonian symplectomorphism has an infinite number of
geometrically different periodic orbits. The mantra behind this note is to point out that the removal of the
Hamiltonian condition destroys most of the rigidity in dynamics. The prototypical example is $\T^2$ with the symplectic
(non Hamiltonian) irrational translation: it is minimal and, therefore, it does not have a single periodic orbit. We will generalize this example.

Rigidity in contact dynamics is more subtle.
Hamiltonian contact flows do correspond to a special kind of Hamiltonian symplectic flows.
Therefore, the existence result of periodic orbits in a generic energy level was conjecturally strengthen to
all the levels in the convex case. The generalization of this fact is the content of the famous Weinstein conjecture. It has been a powerful engine developing
contact topology in the last 30 years. It has been proven in several cases.
However, the behaviour of a discrete contact dynamical system remained vague. Only, recently, an analogue of the Arnold conjecture
was stated: the translated points conjecture (see \cite{sandon}).
In this case, our aim is to show that the interplay between the Reeb flow and the Hamiltonian contactomorphisms,
which is the content of the conjecture, is probably the only natural rigid phenomenum in discrete Hamiltonian contact dynamics.
In particular, we will show that (positive) Hamiltonian contactomorphisms can be pretty wild from a dynamical point of view.

The approximation by conjugation method, introduced by Anosov and Katok, is a remarkable technique to
produce diffeomorphisms with prescribed properties on the asymptotical distribution of their orbits.
In their seminal paper \cite{anosovkatok}, Anosov and Katok proved the existence of ergodic transitive diffeomorphisms
on any closed manifold. The name of the method comes from the fact that the maps are constructed as limits
of conjugates $h R_{\alpha} h^{-1}$ of a non--trivial $\mathbb{S}^1$--action $\{R_{\alpha}: \alpha \in \mathbb{S}^1\}$ on the manifold.
This direct approach was replaced by an abstract one by Fathi and Herman in \cite{fathiherman}. They applied
the Baire category theorem to the closure of the previous set of conjugates to prove the existence of minimal and uniquely ergodic
diffeomorphisms in closed manifolds that admit a locally free $\mathbb{S}^1$--action. A short overview of the conjugation method
together with further applications is found in \cite{fayadkatok}.

In this article, the arguments in \cite{fathiherman} are adapted to the symplectic and contact setting.
Denote by $\symp(M, \omega)$ the group of symplectomorphisms of a symplectic manifold $(M, \omega)$
and $\symp_0(M, \omega)$ the connected component of the identity map. Similarly, denote by $\cont(M, \xi)$
the group of contactomorphisms of a contact manifold $(M, \xi)$ and $\cont_0(M, \xi)$ the connected component
of the identity. All these groups are known to be $C^1$--closed, hence $C^{\infty}$--closed, in $\diff(M)$.

As it will be clear from the contents of the article a more ambitious goal would be to adapt the statements
to the Hamiltonian symplectomorphism group\footnote{As is well--known, the Hamiltonian contactomorphism group
corresponds to the identity component of the contactomorphisms group, i.e. any contact vector field is Hamiltonian.}. This encounters two obstacles:
\begin{itemize}
\item $\ham(M, \omega)$ is not known to be $C^{\infty}$--closed in $\diff(M)$ as this is the content of the Flux
conjecture. However, this can be dealt with in particular cases (for instance, assuming that $M$ is simply connected).
\item A Hamiltonian $\mathbb{S}^1$--action is never locally free. Therefore, the conjugation method, particularly simple in \cite{fathiherman}, needs to be sophisticated \cite{fayadkatok}.
This requires more subtle symplectic topology results to be developed in a forthcoming work.
\end{itemize}

A map is called minimal if every orbit is dense and is called uniquely ergodic provided it has
just one invariant probability measure.
The main theorems of this article are:

\begin{theorem}\label{thm:symplectic}
If a symplectic manifold $(M, \omega)$ admits a locally free $\mathbb{S}^1$--action by symplectomorphisms, then
there exists $\psi \in \symp_0(M)$ such that $\psi$ is minimal.
\end{theorem}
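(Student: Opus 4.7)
The strategy adapts the Baire-category reformulation of the Anosov--Katok conjugation method due to Fathi--Herman to the symplectic setting. Let
\[
\mathcal{A} \;=\; \overline{\{h R_\alpha h^{-1} : h\in\symp_0(M,\omega),\ \alpha\in\mathbb{S}^1\}}^{C^{\infty}} \;\subset\; \symp_0(M,\omega),
\]
which is a Baire space because $\symp_0(M,\omega)$ is $C^{\infty}$-closed in $\diff(M)$. Fix a countable basis $\{U_n\}_{n\in\mathbb{N}}$ for the topology of $M$ and set
\[
\mathcal{M}_n \;=\; \bigl\{\psi\in\mathcal{A} : \forall x\in M,\ \exists k\geq 0,\ \psi^k(x)\in U_n\bigr\}.
\]
Any $\psi\in\bigcap_n\mathcal{M}_n$ is minimal, so it suffices by Baire category to prove that each $\mathcal{M}_n$ is open and dense in $\mathcal{A}$. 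Openness is immediate: compactness of $M$ rephrases membership as $\bigcup_{k=0}^N\psi^{-k}(U_n)=M$ for some $N$, a $C^0$-open condition.

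For density, given $\psi_0\in\mathcal{A}$ and $\varepsilon>0$, the plan is to produce a conjugate in $\mathcal{M}_n$ within $\varepsilon$ of $\psi_0$. First approximate $\psi_0$ by some $h_0 R_{\alpha_0} h_0^{-1}$. Next, replace $\alpha_0$ by a rational $\beta=p/q$ (with $q$ to be chosen large) close enough to $\alpha_0$ that $h_0 R_\beta h_0^{-1}$ stays near $\psi_0$; the map $R_\beta$ is now of finite order $q$. The decisive third step is to construct a symplectomorphism $g\in\symp_0(M,\omega)$ and set $h_1=h_0\circ g$ so that (a) $g R_\beta g^{-1}$ is $C^{\infty}$-close to $R_\beta$, hence $h_1 R_\beta h_1^{-1}$ stays $\varepsilon$-close to $\psi_0$, and (b) the open set $g^{-1}(h_0^{-1}(U_n))$ intersects every $R_\beta$-orbit of $M$. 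Using the identity $(h_1 R_\beta h_1^{-1})^k(y)=h_1 R_\beta^k h_1^{-1}(y)$, condition (b) forces $h_1 R_\beta h_1^{-1}\in\mathcal{M}_n$ and delivers the density.

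The construction of $g$ is the symplectic heart of the argument and is where I expect the main obstacle. Note that a $g$ that commutes with $R_\beta$ exactly only permutes $R_\beta$-orbits and therefore leaves the coverage unchanged, so $g$ must fail to commute with $R_\beta$ to some positive extent; yet that failure has to be finely controlled so as not to violate (a). The locally free symplectic $\mathbb{S}^1$-action provides, around each orbit, an equivariant Darboux--Weinstein model in which $\omega$ has a standard form. In such a chart I would build $g$ as the time-one map of a Hamiltonian flow whose generating function oscillates transversally to the $\mathbb{S}^1$-orbits at an intermediate spatial frequency $m$ chosen with $m\to\infty$ but $q/m\to\infty$: the frequency $m$ shears the small set $h_0^{-1}(U_n)$ into a thin tube meeting every $\mathbb{S}^1$-orbit in an arc of length exceeding $1/q$ (giving (b)), while the condition $q\gg m$ forces the commutator $g R_\beta g^{-1}R_\beta^{-1}$ to be of size $O(m/q)$ (giving (a)). This is the symplectic counterpart of the inflation technique that, in the contact category, is formulated below as Lemma \ref{lem:contactinflating}, and it is where the locally-free $\mathbb{S}^1$-action assumption is essentially used to overcome the symplectic rigidity. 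Once (a)--(b) is in place, the Baire category theorem yields an element of $\bigcap_n\mathcal{M}_n$, which is then a minimal symplectomorphism in $\symp_0(M,\omega)$.
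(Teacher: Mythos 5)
Your global architecture (Baire category on the closed set $\C$ of limits of conjugates, minimality encoded by open sets attached to a countable basis, density of each such set reduced to approximating conjugates of rational rotations) coincides with the paper's. The gap is in the density step. You ask for a symplectomorphism $g$ that (a) approximately commutes with $R_{p/q}$ in the $C^{\infty}$ topology and (b) spreads $h_0^{-1}(U_n)$ so that its preimage meets every orbit of the \emph{finite} cyclic group generated by $R_{p/q}$, i.e.\ so that the $q$ translates of a set of fixed Liouville volume cover $M$. Neither is established. The commutator bound ``$O(m/q)$'' is asserted, not proved; in the $C^{\infty}$ topology the derivatives of a frequency-$m$ shear grow like powers of $m$, so keeping \emph{all} derivatives of $gR_\beta g^{-1}R_\beta^{-1}$ small while $m\to\infty$ is exactly the hard quantitative core of the original Anosov--Katok scheme, which you do not carry out. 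Moreover, a shear supported near the orbits cannot achieve (b) starting from a localized set $h_0^{-1}(U_n)$: you need a global mechanism transporting pieces of $U_n$ to a neighborhood of every orbit, and none appears in your sketch. You also never treat the rationals lying in the stabilizer set $S$, for which $R_{p/q}$ has short orbits and the ``arc of length $>1/q$'' count fails.

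The paper removes both difficulties at once. For $p/q$ coprime with the stabilizers it passes to the quotient $\widehat M=M/\langle R_{p/q}\rangle$ and takes the conjugator $h$ to be a lift of a symplectomorphism of $\widehat M$ (hypothesis (H1)); then $h$ commutes with $R_{p/q}$ \emph{exactly}, so $hR_\beta h^{-1}\to hR_{p/q}h^{-1}=R_{p/q}$ as $\beta\to p/q$ with no commutator estimate whatsoever. And in place of your condition (b) it only requires $h^{-1}(U_n)$ to meet every $\mathbb{S}^1$-orbit (a full circle, hence no volume constraint); taking $\beta$ \emph{irrational} near $p/q$, minimality of the irrational rotation on each circle plus compactness gives that $hR_\beta h^{-1}$ lies in the relevant open set (Lemma \ref{lem:irrationalminimal}). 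The only symplectic input is then hypothesis (H2): one can move $U_n$ by a symplectomorphism so that its preimage meets all orbits. This is proved by combining $n$-transitivity of the Hamiltonian group (Theorem \ref{thm:transitivity}) --- the global transport your sketch lacks --- with the folding Proposition \ref{prop:folding}, which squeezes a codimension-one disk transverse to the orbits into an arbitrarily small polydisk, its inverse inflating a small ball inside $U_n$ onto such a disk. To complete your proof you would either have to supply these two ingredients or genuinely establish the approximate-commutation estimates, which the quotient trick renders unnecessary.
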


\begin{theorem}\label{thm:contact}
If a contact manifold $(M, \xi)$ admits a locally free $\mathbb{S}^1$--action by contactomorphisms, then
there exists $\psi \in \cont_0(M, \xi)$ such that $\psi$ is strictly ergodic.
\end{theorem}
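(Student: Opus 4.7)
The plan is to adapt the Fathi--Herman Baire category scheme \cite{fathiherman} to the contact category. Let $\{R_\alpha\}_{\alpha \in \mathbb{S}^1}$ denote the given locally free contact action and set
\[ \mathcal{A} \;=\; \overline{\bigl\{h\, R_\alpha\, h^{-1} : h \in \cont_0(M, \xi),\ \alpha \in \mathbb{S}^1\bigr\}}, \]
the closure taken in the $C^\infty$ topology. Because $\cont(M, \xi)$ is $C^\infty$--closed in $\diff(M)$, the set $\mathcal{A}$ inherits a Polish topology and Baire's theorem applies; the target is to exhibit the strictly ergodic elements of $\mathcal{A}$ as a dense $G_\delta$.

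First I would record the two standard $G_\delta$ descriptions, neither of which uses the contact structure. Fixing a countable basis $\{U_i\}$ of $M$, the sets $V_{i,N} = \{\psi : M = \bigcup_{|k|\le N} \psi^k(U_i)\}$ are open in $\mathcal{A}$, and a map is minimal if and only if it lies in $\bigcap_i \bigcup_N V_{i,N}$, which is $G_\delta$. Analogously, unique ergodicity translates into uniform convergence of Birkhoff averages tested against a countable dense family in $C^0(M)$, also a $G_\delta$ condition.

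The core of the proof is density: for each $i$ one must show $\bigcup_N V_{i,N}$ is dense in $\mathcal{A}$, and likewise for the unique ergodicity conditions. Given $h R_\alpha h^{-1} \in \mathcal{A}$ and a $C^\infty$--neighborhood $\mathcal{U}$, I would produce inside $\mathcal{U}$ a perturbed conjugate $\psi' = (hg) R_{\alpha + p/q} (hg)^{-1}$ with $q$ very large and $g \in \cont_0(M, \xi)$ chosen so that the $q$ fundamental cells of $R_{p/q}$ are permuted in a way which forces the orbits of $\psi'$ to cover $M$ starting from $U_i$, and to have their Birkhoff sums close to the uniform average over the cells. The same construction, tuned to the relevant open condition, handles minimality and unique ergodicity simultaneously.

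The main obstacle is producing the conjugator $g$ inside $\cont_0(M, \xi)$: partition--of--unity and bump--function constructions used in the smooth Fathi--Herman argument are not compatible with the contact structure. This is precisely the role reserved for Proposition \ref{prop:folding}, which should provide the requisite ``folding'' contactomorphism permuting the fundamental cells, while Lemma \ref{lem:contactinflating} furnishes the $C^\infty$ estimates that keep $\psi'$ inside $\mathcal{U}$ despite taking $q \to \infty$. Granted these two contact--geometric inputs, Baire's theorem applied to $\mathcal{A}$ yields $\psi \in \cont_0(M, \xi)$ simultaneously minimal and uniquely ergodic, hence strictly ergodic.
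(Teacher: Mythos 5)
Your soft framework (the closed set of conjugates, Baire's theorem, the $G_\delta$ descriptions of minimality and of unique ergodicity via Birkhoff averages) matches the paper's setup. But the entire content of the theorem lives in the density step, and there your sketch both leaves the key construction unproven and misidentifies the tools. You propose to approximate an arbitrary $h R_\alpha h^{-1}$ by $(hg) R_{\alpha+p/q}(hg)^{-1}$ with $q\to\infty$, permuting ``fundamental cells'' -- this is the quantitative Anosov--Katok route, and you correctly sense that it requires $C^\infty$ estimates on the conjugator as $q\to\infty$; but you never produce them, and the lemmas you invoke do not provide them. Proposition \ref{prop:folding} is a \emph{symplectic} folding result (squeezing a codimension--1 disk into a small polydisk by a Hamiltonian symplectomorphism) used only for Theorem \ref{thm:symplectic}; it is not a contactomorphism and does not permute cells. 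Lemma \ref{lem:contactinflating} inflates a small ball to fill a cuboid minus a collar by a compactly supported contactomorphism; it yields no $C^\infty$ smallness whatsoever.

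The paper avoids the estimates entirely by the Fathi--Herman device you are missing: density of each open condition $A_i$ reduces (by equivariance and coinitiality) to showing $R_{p/q}\in\overline{A_i}$ for rational $p/q$ coprime with the stabilizers; one then passes to the quotient $\widehat M = M/\langle R_{p/q}\rangle$, builds the conjugator $\widehat h$ downstairs and lifts it (hypothesis (H1)), so that the lift $h$ \emph{commutes} with $R_{p/q}$ and $h R_\beta h^{-1}\to h R_{p/q}h^{-1}=R_{p/q}$ for irrational $\beta\to p/q$ with no estimate needed. The genuinely contact-geometric content is then hypothesis (H3): constructing $f\in\cont_0$ so that $f^{-1}(V)$ meets each orbit of the action in a set of angular measure $>1-\varepsilon$. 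This is done by covering $M$ with cuboids from Lemma \ref{lem:contactinflating} whose boundaries are transverse to the action, moving their centers into $V$ by $n$--transitivity (Theorem \ref{thm:transitivity}), and inflating small balls to fill each cuboid up to a thin collar. None of this appears in your proposal, so the proof as written does not go through.
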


Given a contact manifold $(M, \xi)$, $\xi = \ker(\alpha)$ cooriented, a path/loop of contactomorphisms
$\{\psi_t\}$ is \emph{positive} if $\alpha (\partial \psi/\partial t) > 0$ everywhere.
By definition, this is equivalent to the Hamiltonian $H$ which generates this path/loop being everywhere positive.
The $\mathbb{S}^1$--action on $M$ is said positive if the loop of contactomorphisms it defines is positive.

\begin{corollary}
If the $\mathbb{S}^1$--action is positive then $\psi$ from Theorem \ref{thm:contact} is generated by a positive path.
\end{corollary}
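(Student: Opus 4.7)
The plan is to convert any smooth path from the identity to $\psi$ into a positive one by adding a sufficiently large positive perturbation coming from the given $\mathbb{S}^1$-action, while keeping the endpoints fixed. Since $\psi \in \cont_0(M, \xi)$ by Theorem \ref{thm:contact}, I will first pick any smooth path $\gamma \colon [0,1] \to \cont_0(M, \xi)$ with $\gamma(0) = \mathrm{id}$ and $\gamma(1) = \psi$. Fix a contact form $\alpha$ with $\ker \alpha = \xi$ compatible with the coorientation, and let $H_\gamma(t, p)$ be the contact Hamiltonian of $\gamma$ and $\kappa_{\gamma(t)} > 0$ its conformal factor (defined by $\gamma(t)^* \alpha = \kappa_{\gamma(t)} \alpha$). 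Let $L = \alpha(X_R) > 0$ be the time-independent contact Hamiltonian generating the positive $\mathbb{S}^1$-action $\{R_s\}_{s \in [0, 2\pi]}$.

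Next, I modify $\gamma$ by right-composing with the $\mathbb{S}^1$-loop traversed $k$ times: for each positive integer $k$, set
\[
\mu_k(t) = \gamma(t) \circ R_{2 \pi k t}, \qquad t \in [0, 1].
\]
Since $R_{2 \pi k} = \mathrm{id}$, $\mu_k$ is a smooth path in $\cont_0(M, \xi)$ from $\mathrm{id}$ to $\psi$. A direct chain-rule computation, applied to the composition formula for contact Hamiltonians, yields
\[
H_{\mu_k}(t, p) = H_\gamma(t, p) + 2 \pi k \cdot \kappa_{\gamma(t)}\bigl(\gamma(t)^{-1}(p)\bigr) \cdot L\bigl(\gamma(t)^{-1}(p)\bigr).
\]
By compactness of $M \times [0, 1]$ and strict positivity of $\kappa_\gamma$ and $L$, the second summand is bounded below by a positive constant times $k$, while $H_\gamma$ is uniformly bounded. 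Choosing $k$ large enough forces $H_{\mu_k} > 0$ everywhere, so $\mu_k$ is a positive path generating $\psi$.

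The only ingredient requiring verification is the composition formula for contact Hamiltonians, which is routine, so no substantive obstacle arises; the corollary reduces to the algebraic principle that a large positive contribution from the $\mathbb{S}^1$-direction dominates the (possibly negative) Hamiltonian of an arbitrary path ending at $\psi$. An alternative in the spirit of the conjugation method would be to observe that each approximant $\psi_n = h_n R_{\alpha_n} h_n^{-1}$ arising in the proof of Theorem \ref{thm:contact} is itself the endpoint of the positive path $t \mapsto h_n R_t h_n^{-1}$, $t \in [0, \alpha_n]$ (because conjugation by $h_n$ multiplies the generating Hamiltonian by the positive conformal factor of $h_n^{-1}$), and then to concatenate suitably reparametrized bridging paths while controlling the growth of their Hamiltonians; however, the explicit modification $\mu_k$ above bypasses this convergence analysis entirely.
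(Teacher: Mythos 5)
Your argument is correct and is essentially the paper's own proof: the paper likewise takes a path $\{\varphi_t\}$ from the identity to $\psi$ (justifying its existence via local path-connectedness of the contactomorphism group), forms $\{\varphi_t\circ R_{mt}\}$, and notes that its Hamiltonian is $H$ plus $m$ times a strictly positive term involving the conformal factor and the positive Hamiltonian of the action, so that $m$ large makes it positive. The only cosmetic differences are the parametrization of the loop and the sign convention in the conformal factor, neither of which affects the positivity argument.
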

\begin{proof}
This follows from a general fact for contact manifolds that admit a positive loop.
The group of contactomorphisms is locally path connected (see \cite[Chapter 6]{banyaga}) hence $\cont_0(M, \xi)$ is path connected.
Let $\{\varphi_t\}_{t = 0}^1$ be a path of contactomorphisms from $\varphi_0 = \id$ to $\varphi_1 = \psi$.
Denote by $\{R_{t}\}_{t=0}^1$ the loop of contactomorphisms given by the action.
A computation (see \cite{casalspresas})
 shows that, if $H, G: M \times [0,1] \to \R$ are the Hamiltonians which generate $\{\varphi_t\}$ and $\{R_t\}$,
the composition path $\{\varphi_t \circ R_{mt}\}_{t = 0}^1$ is generated by the Hamiltonian
$$F(p, t) = H(p, t) + m e^{-f_t} G(\varphi_t^{-1}(p), t),$$
where $\varphi_t^* \alpha = e^{f_t} \alpha$ and $m \ge 1$ is arbitrary. Incidentally, notice that $G$ is independent of $t$.
Then, since $G$ is strictly positive, if $m \ge 1$ is sufficiently large, $F > 0$.
\end{proof}

It will be obvious from the version of the conjugation method used in this note that:
\begin{corollary}\label{cor:id}
The diffeomorphisms constructed in Theorems \ref{thm:symplectic} and \ref{thm:contact} can be chosen $C^{\infty}$--close
to the identity.
\end{corollary}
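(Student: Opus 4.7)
The plan is to observe that the proofs of Theorems \ref{thm:symplectic} and \ref{thm:contact} already place $\psi$ inside a Baire subspace of the ambient group $G$ (where $G$ denotes $\symp_0(M, \omega)$ or $\cont_0(M, \xi)$) which happens to contain the identity. Following the Fathi--Herman scheme of \cite{fathiherman}, $\psi$ will be extracted as a generic point of the closed conjugation space
\[
\mathcal{A} = \overline{\{h \circ R_\alpha \circ h^{-1} : h \in G,\ \alpha \in \mathbb{S}^1\}}^{C^\infty} \subset G,
\]
and what the proofs of the main theorems actually establish is that minimality (respectively, strict ergodicity) defines a dense $G_\delta$ subset of the Polish space $\mathcal{A}$ endowed with its subspace $C^\infty$-topology.

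The first step is the trivial observation that $R_0 = \id$, so $\id \in \mathcal{A}$. The second step is to apply the Baire category theorem inside $\mathcal{A}$: the dense $G_\delta$ of minimal (resp.\ strictly ergodic) elements meets every nonempty relatively open subset of $\mathcal{A}$, and in particular meets any prescribed $C^\infty$-neighborhood of $\id$. Any element of this intersection is then a valid choice of $\psi$, simultaneously possessing the desired dynamical property and sitting $C^\infty$-close to the identity.

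The only step where I would pay close attention is verifying that the density argument in the main proofs is genuinely local in $\mathcal{A}$: minimal/strictly ergodic conjugates must be produceable $C^\infty$-close to every prescribed element of $\mathcal{A}$, not merely close to some distinguished base point. This is automatic as soon as the density in the proofs of Theorems \ref{thm:symplectic} and \ref{thm:contact} is phrased in $\mathcal{A}$ rather than in the ambient $G$, which is presumably the reason the paper calls this corollary ``obvious from the version of the conjugation method used''. I therefore do not anticipate any substantive obstacle beyond this bookkeeping.
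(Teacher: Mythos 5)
Your proposal is correct and is exactly the paper's argument: the minimal (resp.\ strictly ergodic) maps form a dense $G_\delta$ in the closed conjugation space $\C = \overline{\{g R_\alpha g^{-1}\}}$, which contains $\id = R_0$, so they accumulate at the identity. The only difference is notational (you write $\mathcal{A}$ for what the paper calls $\C$), and your worry about density being local in $\C$ is already handled by the paper, which proves each $A_i$ is dense in all of $\C$, not merely near a base point.
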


\textbf{Examples}

{\bf 1. Prequantum contact manifolds.}

By a result of Thomas \cite{thomas}, a contact manifold admits a locally free $\mathbb{S}^1$--action if and only if
it is the prequantization $\mathbb{S}^1$--bundle over a symplectic orbifold (also called circle orbibundle).

{\bf 2. Manifolds not admitting a positive $\mathbb{S}^1$--action.}

The spherical cotangent bundle, $\mathbb{S} T^* M$, of a manifold $M$ (also referred to as oriented projectivization of $T^* M$)
carries a canonical cooriented contact structure. There do not exist positive paths of Legendrian isotopies connecting
different fibers of $\mathbb{S} T^* M$ provided the universal cover of $M$ is $\R^n$ (see \cite{colin}) or, more generally,
an open manifold (see \cite{chernov}).
As a consequence, $\mathbb{S} T^* M$ does not admit positive loops. 
However, it may admit $\mathbb{S}^1$--actions: any $\mathbb{S}^1$--action on $M$ by diffeomorphisms lifts to a $\mathbb{S}^1$--action on $\mathbb{S} T^* M$ by contactomorphisms so Theorem \ref{thm:contact} can be applied.
This is the case, for instance, of standard $\T^3$ as it is the spherical cotangent bundle over $\T^2$.

{\bf 3. Symplectic examples.}

If $M$ admits a symplectic free $\mathbb{S}^1$--action,
then it admits a fibration $H\colon M \to \mathbb{S}^1$ such that the action preserves its fibers. Moreover,
there is a $C^{\infty}$-small perturbation $\widetilde{\omega}$ of the original symplectic form for which the action is still symplectic
and such that $\ker (\iota_{X} \widetilde{\omega}) = \ker (dH)$,
where $X$ denotes the infinitesimal generator of the action.
Consequently, there exists a symplectic fibration $\widehat{H}\colon M/\mathbb{S}^1 \to \mathbb{S}^1$.
Partial converse results are available \cite{marisafdez}.


\section*{Acknowledgements}
The authors express their gratitude to Viktor Ginzburg and Daniel Peralta for their useful comments during
the preparation of the manuscript.
The authors have been supported by the Spanish Research Projects SEV-2015-0554 and MTM2013-42135 and by the
ERC Starting Grant 335079 from Daniel Peralta.

\section{Conjugation method}\label{sec:conjugationmethod}

The conjugation method was introduced by Anosov and Katok \cite{anosovkatok} in 1970.
They constructed ergodic diffeomorphisms on every closed manifold as limits of volume preserving periodic transformations.
Later, Fathi and Herman \cite{fathiherman} developed the method to prove the existence of
minimal and uniquely ergodic volume preserving diffeomorphisms in manifolds which admit a locally free $\mathbb{S}^1$--action.
Katok had previously announced \cite{katokrussian} the theorem provided the action is free.
This section, following the approach of \cite{fathiherman}, presents how the conjugation method is used
to find minimal and uniquely ergodic diffeomorphisms.

Given a closed manifold $N$, $\diff(N)$ denotes the group of diffeomorphisms of $N$ equipped with the $C^{\infty}$--topology.
As our aim is to adapt the method to work for subgroups of $\diff(N)$, an abstract subgroup $\G(N) < \diff(N)$ is considered.
These subgroups must be defined, at least, for the manifold in consideration, later denoted $M$, and some quotients
of $M$ over the free action of a finite group.
In \cite{fathiherman}, $\G(N)$ was set to be equal to $\diff(N)$ or $\diff_{\mu}(N)$
(diffeomorphisms preserving a prescribed measure $\mu$ of positive $C^{\infty}$ density).
In this work, $\G(N)$ will later be set to be $\symp_0(M, \omega)$ or $\cont_0(M, \xi)$ and the underlying geometric structure
will be preserved by the finite quotients under consideration.
Although the previously cited subgroups are closed, this discussion does not assume $\G(N)$ is necessarily closed,
$\overline{\G(N)}$ denotes the closure of $\G(N)$.
The topology considered in the group of diffeomorphisms and their subsets will always be the $C^{\infty}$--topology.

Let $M$ be a closed manifold and, for simplicity, put $\G = \G(M)$.
Assume $M$ admits a locally free $\mathbb{S}^1$--action on $M$ by diffeomorphisms in $\G$, i.e.
a group homomorphism $\mathbb{S}^1 \rightarrow \G: \alpha \mapsto R_{\alpha}$ such that for every $x \in M$ the stabilizer subgroup
$S_x = \{\alpha: R_{\alpha}(x) = x\}$ is a discrete subgroup of $\mathbb{S}^1$ (in particular, it is finite).
Throughout this article, $\mathbb{S}^1$ is identified to $\R/\Z$.

The union of all stabilizers $S = \cup_{x \in M} S_x$ is still a finite subgroup of $\mathbb{S}^1$.
Indeed, for any $x \in M$ consider a small neighborhood $V_x$ of $S_x$ in $\mathbb{S}^1$ such that any subgroup of $\mathbb{S}^1$ contained
in $V_x$ is a subgroup of $S_x$.
By continuity one gets that $S_y \subset V_x$ in some neighborhood of $x$, so $S_y < S_x$.
The conclusion follows from the compactness of $M$.

The goal is to prove the existence of elements of $\overline{\G}$ satisfying certain dynamical properties. Let $\A$ be the subset of
$\overline{\G}$ composed of such elements. The conjugation method aims to prove that $\A$ is not empty by showing that
the elements of $\A$ are generic in the subgroup
\begin{equation}\label{eq:Cdefinition}
\C = \overline{\{g R_{\alpha} g^{-1}: \alpha \in \mathbb{S}^1, g \in \G\}}
\end{equation}
of $\overline{\G}$, that is, $\A \cap \C$ is a dense $G_{\delta}$ subset of $\C$
(intersection of countably many open and dense subsets).
Corollary \ref{cor:id} is then a trivial consequence of this approach.
Recall that since $\diff(M)$ is a Baire space and $\C$ is closed in $\overline{\G}$ and so in $\diff(M)$, $\C$ is also a Baire space.
Thus, it is enough to find a countable family $\{A_j\}_{j \in J}$ of open and dense subsets of $\C$
such that
$$\bigcap_{j \in J} A_j = \A \cap \C.$$

In the proofs below, a family of open and dense subsets $\{A_i\}_{i \in I}$ of $\C$ is defined.
It has a countable coinitial subfamily $\{A_j\}_{j \in J}$, that is, for every $i \in I$ there is $j \in J$
such that $A_j \subset A_i$.
Consequently, $\cap_{i \in I} A_i = \cap_{j \in J} A_j$.
Additionally, for any $g \in \G$ and $i \in I$, $g A_i g^{-1} \subset A_{k}$ for some $k \in I$.
In order to prove the density of $A_i$ in $\C$, one needs to show that every element $g R_{\alpha} g^{-1}$
belongs to the closure of $A_i$. In view of the previous properties on the family $\{A_i\}_{i \in I}$,
it is enough to show that $R_{\alpha} \in \overline{A_i}$ for any $\alpha \in \mathbb{S}^1$ and $i \in I$.

A number $\alpha \in \R/\Z$ is said to be \emph{coprime} with $S$ if $S \cap \langle \alpha \rangle = \{0\}$,
where $\langle \alpha \rangle$ denotes the subgroup of $\mathbb{S}^1 \cong \R/\Z$ generated by $\alpha$.
Since the set $S$ of stabilizers is finite and $\Q/\Z$ is dense in $\mathbb{S}^1$, it suffices to check that
$R_{\alpha} \in \overline{A_i}$ for $\alpha \in \Q/\Z$ coprime with $S$ in order to prove it for all $\alpha \in \R/\Z$.

\subsection{Minimal diffeomorphisms}

This subsection shows how the conjugation method was applied in \cite{fathiherman} to obtain diffeomorphisms
with all their orbits dense.

\begin{definition}
A homeomorphism $f \colon X \to X$ is said to be minimal if every orbit is dense, i.e.
$M = \overline{\{f^n(x): n \in \Z\}}$ for every $x \in X$.
\end{definition}
Minimality is a topological property which is easily seen to be equivalent to the non-existence of proper
closed subsets invariant under the dynamics.
\begin{lemma}\label{lem:minimalequivalence}
Let $X$ be a compact topological space.
A homeomorphism $f\colon X \to X$ is minimal if and only if for every open set $U \subset X$ there exists $n \in \N$
such that
$$U \cup f(U) \cup \ldots \cup f^n(U) = X.$$
\end{lemma}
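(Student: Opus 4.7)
The plan is to reduce minimality to the classical equivalent statement that $X$ contains no proper non-empty closed $f$-invariant subset. The closure of any orbit is closed and $f$-invariant, so one direction of this equivalence is immediate and the other is the defining property. Additionally, since $X$ is compact, I would record that minimality upgrades to one-sided density: for any $x$, the $\omega$-limit set $\omega(x) = \bigcap_{N \geq 0} \overline{\{f^n(x) : n \geq N\}}$ is a non-empty closed $f$-invariant set and must therefore equal $X$, so the forward orbit of $x$ is dense; the same argument applied to $f^{-1}$ shows every backward orbit is dense.

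For the ``if'' direction, I would suppose $f$ is not minimal, choose a proper non-empty closed $f$-invariant set $K$, and look at the non-empty open set $U = X \setminus K$. Since $f$ is a homeomorphism and $K$ is $f$-invariant, $f^{i}(U) = U$ for every $i \in \Z$, so $\bigcup_{i=0}^{n} f^i(U) = U$ is a proper subset of $X$ for every $n$, contradicting the covering hypothesis.

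For the ``only if'' direction, fix a non-empty open $U \subset X$. Using that every backward $f$-orbit is dense, for each $x \in X$ some $n \geq 0$ satisfies $f^{-n}(x) \in U$, equivalently $x \in f^n(U)$. Hence $\{f^n(U)\}_{n \geq 0}$ is an open cover of the compact space $X$, and extracting a finite subcover $f^{n_1}(U), \ldots, f^{n_k}(U)$ and setting $N = \max_i n_i$ yields $U \cup f(U) \cup \cdots \cup f^N(U) = X$.

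The only mildly delicate ingredient is the passage from ``every two-sided orbit is dense'' (the stated definition of minimality) to ``every one-sided orbit is dense,'' which genuinely uses compactness through the limit-set argument. Apart from that, the proof is elementary; there is no substantive obstacle.
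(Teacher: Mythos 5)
Your proof is correct, but your route through the ``only if'' direction differs from the paper's. The paper observes that the complement of $\bigcup_{m \in \Z} f^m(U)$ is closed and $f$--invariant, hence empty under minimality, extracts a finite subcover from the \emph{two-sided} iterates, and then applies a suitable power of $f$ to shift all exponents into $\{0,\ldots,n\}$. You instead first upgrade minimality to one-sided density via the $\omega$-- and $\alpha$--limit set argument (which is where compactness enters for you), so that the forward iterates $\{f^n(U)\}_{n\ge 0}$ already cover $X$ and compactness finishes immediately, with no shifting needed. Your version proves a slightly stronger intermediate fact (forward and backward orbits are dense, not just full orbits), at the cost of introducing limit sets; the paper's shift trick is more elementary and avoids them. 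Both arguments are sound, and your ``if'' direction, via a proper non-empty closed invariant set $K$ with $f^i(X\setminus K)=X\setminus K$, matches the spirit of the paper's remark that minimality is equivalent to the non-existence of proper closed invariant sets.
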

\begin{proof}[Sketch of the proof.]
Notice that the complement of $\cup_{m \in \Z} f^m(U)$ in $X$ is closed and $f$--invariant, it must be empty if $f$ is minimal.
The open cover $\{f^m(U)\}_{m \in \Z}$ of $X$ has, by compactness, a finite subcover whose elements, after applying $f$ conveniently,
are forward iterates of $U$.
\end{proof}

Let us apply the conjugation method to obtain minimal diffeomorphisms in $\overline{\G}$.
Denote by $\M$ the set of minimal diffeomorphisms. Consider the sets
$$W_{U, n} = \{g \in \G: U \cup g(U) \cup \ldots \cup g^n(U) = M\},$$
where $U$ ranges over the open subsets of $M$ and $n$ over the positive integers.
By definition, $W_{U,n}$ is an open subset of $\diff(M)$.
Consequently, $\M_U = \overline{\G} \, \cap \, \left(\cup_{n \ge 1} W_{U, n}\right)$ is an open subset of $\overline{\G}$.
By Lemma \ref{lem:minimalequivalence}, $\M$ is the intersection of all open sets $\M_U$.
Since $M$ has a countable basis of open sets $\{U_i\}_{i\in \N}$, the subfamily $\{\M_{U_i}\}_{i \in \N}$ is coinitial
and $\M = \cap_i \M_{U_i}$. Moreover, $g \M_U g^{-1} = \M_{g^{-1}(U)}$.
Thus, in order to check that $\M_U$ is dense in $\C$ it is enough to prove that $R_{\alpha}$ is accumulated by elements
of $\M_U$ for every $\alpha \in (\Q/\Z) \setminus S$.

Given any $\alpha = p/q \notin S$, $\mathrm{gcd}(p,q) = 1$, $F_q$ denotes the subgroup of $\mathbb{S}^1$ generated by $p/q$.
The quotient of $M$ under the action of $F_q$ is a manifold $\widehat{M} = M/F_q$. The $\mathbb{S}^1$--action on $M$ induces,
by the identification $\mathbb{S}^1/F_q \cong \mathbb{S}^1$, another locally free $\mathbb{S}^1$--action on $\widehat{M}$.
This action is given by $\{\widehat{R}_{\beta}: \beta \in \mathbb{S}^1\}$, where $\widehat{R}_{q\alpha}$
is the projection of $R_{\alpha}$ onto $\widehat{M}$.
The subgroup $\widehat{\G} := \G(\widehat{M}) < \diff(\widehat{M})$ must satisfy the following hypothesis:

\medskip
\textbf{(H1)}
Any element of $\widehat{\G}$ has a lift in $\G = \G(M)$.
\medskip

Next condition must be also fulfilled. These hypothesis are discussed in Section \ref{sec:proofs}.
Note that it is an exercise to check them in the case $\G(N) = \diff(N)$ or $\G(N) = \diff_{\mu}(N)$,
where $\mu$ is a probability measure on $N$ of positive $C^{\infty}$ density.

\medskip
\textbf{(H2)} For any locally free $\mathbb{S}^1$--action on a manifold $N$, and any open $V \subset N$, there exists an
element $f \in \G(N)$ such that $f^{-1}(V)$ meets all the orbits of the action.
\medskip

Fix an open set $U \subset M$. For the previous $\alpha = p/q$, denote by $\widehat{U}$ the projection of $U$ onto $\widehat{M}$
and let $\widehat{h}$ be the map supplied by (H2) for $N = \widehat{M}$,
$V = \widehat{U}$ and the induced $\mathbb{S}^1$--action on $\widehat{M}$.
There is a lift $h \in \G$ of $\widehat{h}$ whose existence is guaranteed by (H1). It satisfies
$h R_{\alpha} h^{-1} = R_{\alpha}$ and $h^{-1}(U)$ meets all the orbits of the $\mathbb{S}^1$--action on $M$ because $\alpha$
is coprime with $S$.

The following lemma generalizes to $\mathbb{S}^1$--actions a simple fact:
the iterates of an open interval under an irrational rotation eventually cover $\mathbb{S}^1$.

\begin{lemma}\label{lem:irrationalminimal}
For any $\beta \notin \Q/\Z$, $h R_{\beta} h^{-1} \in \M_U$.
\end{lemma}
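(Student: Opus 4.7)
My plan is to rewrite the iterates of $h R_\beta h^{-1}$ and reduce the statement to a covering property of the $\mathbb{S}^1$-action.

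Observe that $(h R_\beta h^{-1})^k(U) = h(R_{k\beta}(h^{-1}(U)))$ for every $k \in \Z$. Set $V = h^{-1}(U)$. Applying $h^{-1}$ to the desired equality, the condition $U \cup (h R_\beta h^{-1})(U) \cup \ldots \cup (h R_\beta h^{-1})^n(U) = M$ becomes
\[
V \cup R_\beta(V) \cup R_{2\beta}(V) \cup \ldots \cup R_{n\beta}(V) = M.
\]
Thus it suffices to find some $n \in \N$ for which the above identity holds. Also note that $h R_\beta h^{-1} \in \G \subset \overline{\G}$, since both $h$ and $R_\beta$ lie in $\G$; so membership in $\M_U$ reduces precisely to this covering property.

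The key input is hypothesis (H2), guaranteeing that $V = h^{-1}(U)$ meets every orbit of the $\mathbb{S}^1$-action. For each $x \in M$, the set
\[
T_x = \{\gamma \in \mathbb{S}^1 : R_\gamma(x) \in V\}
\]
is therefore a nonempty open subset of $\mathbb{S}^1$. The next step, which I expect to be the main technical point, is to upgrade this pointwise openness to a uniform estimate: there exists $\delta > 0$ (independent of $x$) such that $T_x$ contains an open arc of length at least $\delta$ for every $x \in M$. This follows by a standard compactness argument. For each $x$ pick an arc $I_x \subset T_x$ of some positive length $\delta_x$; by continuity of the action $(\gamma, y) \mapsto R_\gamma(y)$ and openness of $V$, the same arc $I_x$ (slightly shrunk) is contained in $T_y$ for all $y$ in a neighborhood $N_x$ of $x$; finally, extract a finite subcover $N_{x_1}, \ldots, N_{x_r}$ of $M$ and take $\delta$ to be the minimum of the lengths of the corresponding arcs.

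Finally, since $\beta$ is irrational, $\{k\beta \bmod 1 : k \in \Z\}$ is equidistributed in $\mathbb{S}^1$; in particular there exists $n = n(\beta, \delta) \in \N$ such that the finite set $\{-k\beta : 0 \le k \le n\}$ is $\delta$-dense in $\mathbb{S}^1$. Then for any $x \in M$, the arc in $T_x$ of length $\delta$ must contain some $-k\beta$ with $0 \le k \le n$, i.e.\ $R_{-k\beta}(x) \in V$, which means $x \in R_{k\beta}(V)$. Hence $\bigcup_{k=0}^n R_{k\beta}(V) = M$, completing the proof.
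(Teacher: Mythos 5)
Your proof is correct and follows essentially the same route as the paper's: reduce to showing $\bigcup_{k=0}^{n} R_{k\beta}(h^{-1}(U)) = M$, use (H2) so that $h^{-1}(U)$ meets every orbit, use irrationality of $\beta$ so the rotation is minimal on each orbit circle, and use compactness to make the covering uniform. Your version merely organizes the compactness step more quantitatively (a uniform arc length $\delta$ in the return sets $T_x$ plus $\delta$-density of $\{-k\beta\}$) where the paper argues orbit by orbit over the compact orbit space.
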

\begin{proof}
Let $\gamma$ denote a orbit of the action and $W$ an open subset of $M$, $\gamma \cap W \neq \emptyset$.
There exists $n \ge 1$ such that $W \cup R_{\beta}(W) \cup \ldots \cup R_{n\beta}(W)$ cover $\gamma$.
The same is true for orbits close to $\gamma$.
Take $W = h^{-1}(U)$, then $W$ meets every orbit.
Since the orbit space of the action is compact, there exists $m \ge 1$ such that
$W \cup R_{\beta}(W) \cup \ldots \cup R_{\beta}^m(W)$ contains every orbit $\gamma$.
Thus $h R_{\beta} h^{-1} \in W_{U, m} \cap \C \subset \M_U$.
\end{proof}

Take a sequence of irrational numbers $\beta_n \to \alpha$.
Clearly, $h R_{\beta_n} h^{-1} \xrightarrow{\C^{\infty}} h R_{\alpha} h^{-1} = R_{\alpha}$.
Since from Lemma \ref{lem:irrationalminimal}, $h R_{\beta_n} h^{-1}$ belongs to $\M_U$ and also, by definition, to $\C$,
the map $R_{\alpha} \in \overline{\M_U}$ and the conclusion follows.

In conclusion, the existence of a minimal diffeomorphism in $\overline{\G}$ is guaranteed provided (H1) and (H2) are satisfied.

\subsection{Strictly ergodic diffeomorphisms}

In this subsection, the conjugation method is applied to obtain a diffeomorphism
whose orbits are uniformly distributed along $M$ in a measure theory sense.

\begin{definition}
Let $X$ be a compact metric space. A homeomorphism $f\colon X \to X$ is called uniquely ergodic if it has only one
invariant probability measure. If the invariant measure has full support $f$ is called strictly ergodic.
\end{definition}

Next lemma follows from two dynamical facts: the support of an $f$--invariant measure is itself invariant under $f$
and any compact invariant subset of $X$ admits an invariant measure supported on it.

\begin{lemma}
A map is strictly ergodic if and only if it is uniquely ergodic and minimal.
\end{lemma}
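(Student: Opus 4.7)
The plan is to derive both directions directly from the two facts hinted at immediately above the lemma: (a) the support of an $f$-invariant probability measure is itself $f$-invariant; (b) every nonempty compact $f$-invariant subset of $X$ carries an $f$-invariant probability measure (Krylov--Bogolyubov).

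For the implication ($\Leftarrow$), assume $f$ is uniquely ergodic and minimal, and let $\mu$ be its unique invariant probability measure. I would consider $\supp(\mu)$, which is automatically closed and nonempty. By fact (a), it is $f$-invariant. By minimality of $f$, the only nonempty closed invariant subset of $X$ is $X$ itself, so $\supp(\mu) = X$; thus $\mu$ has full support and $f$ is strictly ergodic.

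For the implication ($\Rightarrow$), assume $f$ is strictly ergodic with unique invariant measure $\mu$ of full support. Unique ergodicity is immediate from the definition, so only minimality requires argument. Arguing by contraposition, suppose $f$ is not minimal. Then by Lemma \ref{lem:minimalequivalence} (or directly from the definition) there is a proper nonempty closed $f$-invariant subset $Y \subsetneq X$. By fact (b), $Y$ carries some $f$-invariant probability measure $\nu$, which is also an invariant probability measure on $X$ supported in $Y$. By unique ergodicity, $\nu = \mu$, but then $\supp(\mu) \subseteq Y \subsetneq X$, contradicting the fact that $\mu$ has full support (the open set $X\setminus Y$ is nonempty yet has $\mu$-measure zero). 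Hence $f$ must be minimal.

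There is no real obstacle here: the argument is essentially a bookkeeping exercise around the two quoted dynamical facts. The only small point worth verifying carefully is the Krylov--Bogolyubov step in the ($\Rightarrow$) direction, namely that a compact metric space together with a homeomorphism always supports an invariant Borel probability measure, so that the hypothetical proper invariant closed set $Y$ genuinely produces a competing measure $\nu \ne \mu$.
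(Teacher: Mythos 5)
Your argument is correct and is exactly the one the paper intends: it only sketches the proof by citing the two facts (invariance of the support, and existence of an invariant measure on any nonempty compact invariant set), and your write-up fleshes out precisely that sketch. No issues.
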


Once strict ergodicity has been split into two properties, let us take care of unique ergodicity.

\begin{proposition}\label{prop:uecharacterization}
Let $X$ be a compact metric space and $f\colon X \to X$ be a homeomorphism. The following statements are equivalent.
\begin{itemize}
\item $f$ is uniquely ergodic.
\item For every map $u \in C^0(X, \R)$,
$\frac{1}{n} \left(\sum_{k = 0}^{n-1} u \circ f^k\right)$ converges uniformly
as $n \to \infty$ to a constant (equal to $\int_X u \, d\mu$,
where $\mu$ is the only invariant probability measure).
\end{itemize}
\end{proposition}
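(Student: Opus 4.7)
The plan is to prove the two implications separately, handling the easier direction first. For the reverse implication, assume uniform convergence of Birkhoff averages to a constant for every $u \in C^0(X, \R)$. If $\nu$ is any $f$--invariant probability measure, I would integrate the equality $\frac{1}{n}\sum_{k=0}^{n-1} u \circ f^k = c_n(u)$ (where $c_n(u)$ is the average, converging uniformly to some constant $c(u)$) against $\nu$; invariance gives $\int u \, d\nu = c_n(u) \to c(u)$, and uniform convergence allows the exchange of limit and integral. Thus $\int u \, d\nu$ is determined by $u$ alone, independently of $\nu$. Since continuous functions separate Borel probability measures on a compact metric space, any two invariant probability measures coincide, which yields unique ergodicity, and by construction the common value is $\int u \, d\mu$.

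For the forward direction I would argue by contradiction. Assume $f$ is uniquely ergodic with unique invariant probability measure $\mu$, and that the conclusion fails: there exist $u \in C^0(X, \R)$, $\varepsilon > 0$, a sequence $n_j \to \infty$, and points $x_j \in X$ such that
\[ \left| \frac{1}{n_j} \sum_{k=0}^{n_j - 1} u(f^k(x_j)) - \int_X u \, d\mu \right| \ge \varepsilon. \]
Associated to each pair $(x_j, n_j)$ consider the empirical probability measure $\mu_j = \frac{1}{n_j} \sum_{k=0}^{n_j - 1} \delta_{f^k(x_j)}$. Since $X$ is a compact metric space, the space of Borel probability measures on $X$ is weak--$*$ compact and metrizable, so, after passing to a subsequence, $\mu_j \to \nu$ weak--$*$ for some probability measure $\nu$.

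The key remaining step is to verify that $\nu$ is $f$--invariant: for any $v \in C^0(X, \R)$ one has
\[ \left| \int v \, d(f_* \mu_j) - \int v \, d\mu_j \right| = \frac{1}{n_j} \left| v(f^{n_j}(x_j)) - v(x_j) \right| \le \frac{2 \|v\|_\infty}{n_j} \longrightarrow 0, \]
so passing to the limit gives $\int v \, d(f_* \nu) = \int v \, d\nu$ for every continuous $v$, hence $f_* \nu = \nu$. Unique ergodicity then forces $\nu = \mu$, and weak--$*$ convergence applied to $u$ gives $\int u \, d\mu_j \to \int u \, d\mu$, contradicting the assumed inequality.

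The main obstacle is the forward direction, and specifically the invariance check for $\nu$: the choice of empirical measures must telescope correctly so that only the endpoint terms $v(x_j)$ and $v(f^{n_j}(x_j))$ survive the comparison, and uniform boundedness of $v$ on the compact space $X$ is then what lets us conclude. Everything else is a packaging of weak--$*$ compactness of $\mathcal{P}(X)$ and the fact that continuous functions separate measures.
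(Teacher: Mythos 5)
Your proof is correct: the paper does not prove this proposition itself but simply cites Walters, Theorem 6.19, and your argument (empirical measures, weak--$*$ compactness and metrizability of $\mathcal{P}(X)$, the telescoping invariance check, and separation of measures by continuous functions for the converse) is precisely the standard proof given there. One cosmetic point: in the reverse direction the Birkhoff average $\frac{1}{n}\sum_{k=0}^{n-1} u\circ f^{k}$ is a function, not literally a constant, for finite $n$; what you actually use is that its integral against any invariant $\nu$ equals $\int_X u\,d\nu$ by invariance while converging to the constant $c(u)$ by uniform convergence, which is exactly how you then conclude.
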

\begin{proof}
See Walters \cite[Theorem 6.19]{walters}
\end{proof}

Since the space of real--valued continuous functions over $M$ is separable, Proposition \ref{prop:uecharacterization}
can be used to show that the set of uniquely ergodic diffeomorphisms $\E$ is a $G_{\delta}$ subset of $\diff(M)$.
Indeed, let $u \in C^0(X, \R)$, $\varepsilon > 0$. Define
$$\E(u, \varepsilon) = \biggl\{f \in \C: \exists \, n \ge 1, c \in \R \text{ such that }
\biggl|\biggl| \frac{1}{n} \sum_{k = 0}^{n-1} u \circ f^k - c\biggr|\biggr| < \varepsilon \biggr\},$$
where $\C$, the closure of the set of conjugates of the action, was defined in (\ref{eq:Cdefinition}).
Trivially, the sets $\E(u, \varepsilon)$ are open.
Fix a dense sequence $\{u_i\}$ in $C^0(X, \R)$.
\begin{lemma}
$$\E \cap \C = \bigcap_i \bigcap_{k \ge 1} \E(u_i, 1/k).$$
\end{lemma}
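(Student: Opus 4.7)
The plan is to verify both inclusions of the claimed set equality, using the Birkhoff characterization of unique ergodicity from Proposition \ref{prop:uecharacterization} and the density of the sequence $\{u_i\}$ in $C^0(X, \R)$. Observe first that by the very definition of $\E(u_i, 1/k)$, every element of the right--hand side already lies in $\C$, so the issue in each direction is only whether unique ergodicity is equivalent to the stated approximation property on the dense family $\{u_i\}$.

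For the inclusion $\E \cap \C \subseteq \bigcap_i \bigcap_k \E(u_i, 1/k)$, I would take $f \in \E \cap \C$ and fix $i, k$. Proposition \ref{prop:uecharacterization} applied to $u = u_i$ guarantees that $\frac{1}{n} \sum_{j=0}^{n-1} u_i \circ f^j$ converges uniformly to the constant $c = \int_M u_i \, d\mu$, where $\mu$ is the unique invariant measure. Choosing $n$ large enough so that the sup--norm distance from $c$ is less than $1/k$ shows $f \in \E(u_i, 1/k)$.

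For the reverse inclusion, I would suppose $f$ lies in every $\E(u_i, 1/k)$ and prove that $f$ has only one invariant probability measure. Let $\mu$ and $\nu$ be any two invariant Borel probability measures on $M$. For each $i$ and $k$, pick $n \geq 1$ and $c \in \R$ with
\[
\Bigl\| \frac{1}{n} \sum_{j=0}^{n-1} u_i \circ f^j - c \Bigr\|_{\infty} < \frac{1}{k}.
\]
Integrating against $\mu$ and using $f$--invariance of $\mu$ gives $\int u_i \, d\mu = \int \frac{1}{n}\sum_{j=0}^{n-1} u_i \circ f^j \, d\mu$, so $|\int u_i \, d\mu - c| < 1/k$, and likewise for $\nu$. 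By the triangle inequality $|\int u_i \, d\mu - \int u_i \, d\nu| < 2/k$ for every $k$, hence $\int u_i \, d\mu = \int u_i \, d\nu$ for every $i$. Since $\{u_i\}$ is dense in $C^0(M, \R)$, this forces $\mu = \nu$ by the Riesz representation theorem, so $f$ is uniquely ergodic, i.e.\ $f \in \E$; combined with $f \in \C$ this completes the inclusion.

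There is no real obstacle in this lemma: the only point to handle with some care is that the approximation hypothesis is only assumed on the countable dense family $\{u_i\}$ rather than on all of $C^0(M, \R)$, but the integration trick against a hypothetical second invariant measure transfers the approximation bound into an equality of integrals $\int u_i \, d\mu = \int u_i \, d\nu$, at which point density closes the argument.
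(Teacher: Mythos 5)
Your proof is correct, but it is organized differently from the paper's. The paper's entire proof is the one--line observation that if $\varepsilon_1 > \varepsilon_0 > 0$ and $\Vert u_1 - u_0\Vert < \varepsilon_1 - \varepsilon_0$ then $\E(u_0,\varepsilon_0)\subset\E(u_1,\varepsilon_1)$; this shows the countable family $\{\E(u_i,1/k)\}$ is coinitial in the full family $\{\E(u,\varepsilon)\}_{u\in C^0,\,\varepsilon>0}$, so the countable intersection equals $\bigcap_{u,\varepsilon}\E(u,\varepsilon)$, and the identity $\E\cap\C=\bigcap_{u,\varepsilon}\E(u,\varepsilon)$ is left implicit. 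You instead prove both inclusions of the stated equality head--on: the forward inclusion via Proposition \ref{prop:uecharacterization} exactly as the paper would, and the reverse inclusion by the integration trick --- pairing the approximate Birkhoff sums against two hypothetical invariant measures $\mu,\nu$, using invariance to get $|\int u_i\,d\mu - c| < 1/k$, and concluding $\mu=\nu$ from density of $\{u_i\}$ and Riesz representation. The difference is where the density of $\{u_i\}$ is exploited: the paper uses it at the level of the open sets $\E(u,\varepsilon)$ (coinitiality), while you use it at the level of measures. Your version has the merit of being self--contained, since it supplies the reverse implication (``approximate constancy of some Birkhoff average for all test functions implies unique ergodicity'') that the paper's terse proof silently delegates to the reader; the paper's version has the merit of isolating the only new observation needed beyond the already--stated characterization of unique ergodicity. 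One small point worth making explicit in your write--up: uniqueness of the invariant measure suffices for membership in $\E$ because existence is automatic on a compact space (Krylov--Bogolyubov), so showing $\mu=\nu$ for any two invariant measures does finish the argument.
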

\begin{proof}
Simply notice that if $\varepsilon_1 > \varepsilon_0 > 0$ and $\Vert u_1 - u_0 \Vert < \varepsilon_1 - \varepsilon_0$
then $\E(u_0, \varepsilon_0) \subset \E(u_1, \varepsilon_1)$.
\end{proof}
Furthermore, note that $g \, \E(u, \varepsilon) \, g^{-1} = \E(u \circ g, \varepsilon)$.
Thus, in order to show that $\E$ is dense in $\C$ it suffices to check that $R_{\alpha}$ belongs to
$\overline{\E(u, \varepsilon)}$ for every rational $\alpha$ not contained in $S$.

Henceforth, assume $u \in C^0(M, \R)$ and $\varepsilon > 0$ are fixed and the following hypothesis is satisfied.

\medskip
\textbf{(H3)}
Given a locally free $\mathbb{S}^1$--action $\{R_{\alpha}\}_{\alpha \in \mathbb{S}^1}$ on a manifold $N$, an open set $V \subset N$
and $\varepsilon > 0$, there exists an element $f \in \G(N)$ such that
$$\lambda(\{\alpha \in \mathbb{S}^1: R_{\alpha}(y) \notin f^{-1}(V)\}) < \varepsilon$$
for every $y \in N$ ($\lambda$ denotes the Lebesgue measure in $\mathbb{S}^1$ of total mass equal to 1).
\medskip

There are some unavoidable bothering technical issues to check this condition
(see \cite[Section 6]{fathiherman}).

As in the minimal case, fix $\alpha = p/q \notin S$, $\gcd(p, q) = 1$, and write $F_q = \langle \alpha \rangle$. Define
$$\widehat{u} = \frac{1}{q} \sum_{k = 0}^{q-1} u \circ R_{k/q}.$$
Clearly, $\widehat{u}$ can be seen as a function on the quotient $\widehat{M} = M/F_q$.
Consider an arbitrary $\eta > 0$ and fix $\delta > 0$ such that $\delta (1 + ||u||) < \eta$.
Take $y_0 \in \widehat{M}$ and note that
$\widehat{U} = \{y \in \widehat{M}: |\widehat{u}(y) - \widehat{u}(y_0)| < \delta\}$
is a non--empty open subset of $\widehat{M}$.
Apply (H3) to find $\widehat{h} \in \widehat{\G}$ such that for every $y \in \widehat{M}$,
$\lambda(\{\alpha \in \mathbb{S}^1: \widehat{R}_{\alpha}(y) \notin \widehat{h}^{-1}(U)\}) < \delta$,
where $\widehat{R}$ denotes the $\mathbb{S}^1$--action on $\widehat{M}$ induced by $R$. Then,
\[
\left|\int_{\mathbb{S}^1} \widehat{u} \circ \widehat{h} \circ \widehat{R}_{\beta}(y) d\beta - \widehat{u}(y_0)\right| \le
\delta + \delta ||\widehat{u}|| \le \delta (1 + ||u||) < \eta.
\]
for every $y \in \widehat{M}$.
Choose a lift $h \in \G = \G(M)$ of $\widehat{h}$, whose existence is guaranteed by (H1). Then,
$h \circ R_{\alpha} = R_{\alpha} \circ h$, and, for every $x \in M$,
$$\int_{\mathbb{S}^1} u \circ h \circ R_{\theta}(x) d\theta =
\int_{\mathbb{S}^1} \widehat{u} \circ \widehat{h} \circ \widehat{R}_{\beta}(\widehat{x}) d\beta,$$
where $\widehat{x}$ denotes the projection of $x$ onto $\widehat{M}$.
In sum, we have proved the following lemma.

\begin{lemma}\label{lem:averagingmap}
For any $\eta > 0$, there exists $h \in \G$ and a constant $c \in \R$ such that
$$\left| \int_{\mathbb{S}^1} u \circ h \circ R_{\theta}(x) d\theta - c\right| < \eta$$
holds for every $x \in M$.
\end{lemma}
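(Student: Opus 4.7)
The plan is to reduce the statement to an equidistribution estimate on a finite quotient $\widehat{M} = M/F_q$, apply hypothesis (H3) there, and then lift back through (H1). First I would fix $\eta > 0$ and choose a rational $\alpha = p/q \notin S$ with $\mathrm{gcd}(p,q) = 1$. Since $\alpha$ is coprime with $S$, the subgroup $F_q = \langle \alpha \rangle$ acts freely on $M$, so the quotient $\widehat{M}$ is a manifold carrying a locally free induced $\mathbb{S}^1$-action $\widehat{R}$, characterized by $\widehat{R}_\beta \circ \pi = \pi \circ R_{\beta/q}$ for the projection $\pi \colon M \to \widehat{M}$. Next I would introduce the $F_q$-average $\widehat{u}(\widehat{x}) = \frac{1}{q}\sum_{k=0}^{q-1} u(R_{k/q}(x))$, which is $F_q$-invariant, descends to a continuous function on $\widehat{M}$, and satisfies $||\widehat{u}|| \le ||u||$.

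Given this setup, pick $\delta > 0$ with $\delta(1 + ||u||) < \eta$, a basepoint $y_0 \in \widehat{M}$, and set $\widehat{U} = \{y \in \widehat{M} : |\widehat{u}(y) - \widehat{u}(y_0)| < \delta\}$, an open neighbourhood of $y_0$. Applying (H3) to $(\widehat{M}, \widehat{U}, \delta)$ yields $\widehat{h} \in \widehat{\G}$ such that, for every $y \in \widehat{M}$, the bad set $\{\beta \in \mathbb{S}^1 : \widehat{R}_\beta(y) \notin \widehat{h}^{-1}(\widehat{U})\}$ has Lebesgue measure less than $\delta$. Splitting the integral $\int_{\mathbb{S}^1} \widehat{u}(\widehat{h}(\widehat{R}_\beta(y)))\, d\beta$ between this bad set (where the integrand is bounded by $||\widehat{u}||$) and its complement (where it lies within $\delta$ of $\widehat{u}(y_0)$) gives a deviation from $\widehat{u}(y_0)$ of at most $\delta + \delta ||\widehat{u}|| < \eta$, uniformly in $y$.

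Finally, I would lift $\widehat{h}$ to $h \in \G$ via (H1) and set $c = \widehat{u}(y_0)$. The lemma then follows from the identity
$$\int_{\mathbb{S}^1} u(h(R_\theta(x)))\, d\theta = \int_{\mathbb{S}^1} \widehat{u}(\widehat{h}(\widehat{R}_\beta(\widehat{x})))\, d\beta,$$
which I would verify by the substitution $\theta = (\beta + k)/q$ with $k = 0, \ldots, q-1$ and $\beta \in [0,1)$. The coprimality $\mathrm{gcd}(p,q) = 1$ guarantees that the indices $k/q$ enumerate $F_q$, so that the $q$ partial integrals collapse to $\widehat{u}$ after averaging. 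The main technical point, and the expected obstacle, is arranging that the lift $h$ intertwines the $F_q$-action, i.e., $h \circ R_{k/q} = R_{k/q} \circ h$, so that the collapse above and the compatibility $\pi \circ h = \widehat{h} \circ \pi$ both hold; this equivariance must be built into the interpretation of the word "lift" in (H1), and verifying it will be the nontrivial content of adapting (H1) to the symplectic and contact groups in the later applications.
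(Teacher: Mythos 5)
Your proposal is correct and follows essentially the same route as the paper: averaging $u$ over $F_q$ to obtain $\widehat{u}$ on $\widehat{M}$, applying (H3) to the superlevel set $\widehat{U}$ around $\widehat{u}(y_0)$, splitting the integral over the good and bad sets, and lifting $\widehat{h}$ via (H1) so that the circle integrals upstairs and downstairs coincide. The equivariance issue you flag at the end is indeed the point the paper leaves implicit in its statement of (H1) (it asserts $h R_{\alpha} h^{-1} = R_{\alpha}$ for the lift without comment), so your reading is consistent with the intended argument.
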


\begin{proposition}
$R_{\alpha} \in \overline{\E(u, \varepsilon)}$.
\end{proposition}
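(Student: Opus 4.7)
The plan is to approximate $R_\alpha$ in $C^\infty$ by conjugates of the form $h R_{\beta_n} h^{-1}$, where $h \in \G$ is the averaging lift supplied by Lemma \ref{lem:averagingmap} and $\beta_n \to \alpha$ is a sequence of irrationals. Because $h$ descends to $\widehat{h}$ on $\widehat{M} = M/F_q$ with $F_q = \langle\alpha\rangle$, the identity $h R_\alpha = R_\alpha h$ holds, so $h R_{\beta_n} h^{-1} \to h R_\alpha h^{-1} = R_\alpha$ in $C^\infty$. Each conjugate $h R_{\beta_n} h^{-1}$ already lies in $\C$ by its definition in (\ref{eq:Cdefinition}), so the task reduces to showing that, for $\beta$ irrational, $h R_\beta h^{-1} \in \E(u,\varepsilon)$.

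The key computation is that for $f = h R_\beta h^{-1}$ and $y = h^{-1}(x)$ one has
$$\frac{1}{n}\sum_{k=0}^{n-1} u(f^k(x)) = \frac{1}{n}\sum_{k=0}^{n-1}(u\circ h)(R_{k\beta}(y)).$$
For $\beta$ irrational, $R_\beta$ restricts to an irrational rotation on every $\mathbb{S}^1$--orbit $\mathbb{S}^1/S_x$ of $M$, and these rotations are uniquely ergodic. I would invoke the uniform version of Weyl equidistribution (Proposition \ref{prop:uecharacterization} itself, applied orbit by orbit) combined with compactness of the orbit space $M/\mathbb{S}^1$ to conclude that, for fixed irrational $\beta$,
$$\frac{1}{n}\sum_{k=0}^{n-1}(u\circ h)(R_{k\beta}(y))\;\xrightarrow[n\to\infty]{}\;\int_{\mathbb{S}^1}(u\circ h)(R_\theta(y))\,d\theta$$
uniformly in $y \in M$. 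Having first chosen $h$ via Lemma \ref{lem:averagingmap} with parameter $\eta < \varepsilon$, so that the limit integral differs from a single constant $c$ by at most $\eta$ uniformly in $y$, one obtains that for $n$ sufficiently large the Birkhoff averages of $u$ along $f$--orbits are within $\varepsilon$ of $c$, uniformly in $x$. Hence $h R_\beta h^{-1} \in \E(u,\varepsilon)$.

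The step I expect to demand the most care is the \emph{uniformity} in $y$ of the Weyl--type convergence. A quick proof is to fix any continuous $\phi$ on $M$, average it over the $\mathbb{S}^1$--action to produce $\widehat{\phi}$ on $\widehat M$, and note that $\phi - \widehat{\phi}$ can be uniformly approximated by finite sums $\sum c_j(y)\, e^{2\pi i n_j \theta}$ pulled back along the orbits, on which rotation by $\beta$ acts with eigenvalues bounded away from $1$ (since $\beta$ is irrational), making the corresponding Birkhoff sums $O(1/n)$ uniformly. This, together with continuity of $\phi$ and compactness, yields the uniform convergence needed, after which the previous paragraph completes the proof.
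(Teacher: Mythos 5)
Your proposal is correct and follows essentially the same route as the paper: conjugate the Birkhoff sums back to $R_\beta$, use uniform unique ergodicity of the irrational rotation along orbits together with Lemma \ref{lem:averagingmap}, and then let irrational $\beta_n \to \alpha$. The only difference is that you supply a Fourier/Weyl argument for the uniformity in $y$ of the convergence, which the paper simply asserts.
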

\begin{proof}
Since an irrational rotation is uniquely ergodic, for any $\beta \notin \Q/\Z$ and $x \in M$
$$\lim_{n \to \infty} \frac{1}{n} \sum_{k = 0}^{n-1} v \circ R_{\beta}^k(x) = \int_{\mathbb{S}^1} v \circ R_{\theta} d\theta$$
holds for every $v \in C^0(M, \R)$.
Furthermore, the convergence is uniform in $x \in M$.
In particular, if $h$ comes from Lemma \ref{lem:averagingmap}
\[
\lim_{n \to \infty} \frac{1}{n} \sum_{k = 0}^{n-1} u \circ h \circ R_{\beta}^k \left( h^{-1} (x) \right) =
\int_{\mathbb{S}^1} (u \circ h) \circ R_{\theta}(h^{-1}(x)) d\theta.
\]
and the convergence is uniform.
If $\eta$ from Lemma \ref{lem:averagingmap} is chosen sufficiently small and $n$ is large
\begin{align*}
\left\Vert \frac{1}{n} \sum_{k = 0}^{n-1} u \circ \left( h R_{\beta}^k h^{-1} \right) - c \right\Vert &
\le \left\Vert \frac{1}{n} \sum_{k = 0}^{n-1} u \circ h \circ R_{\beta}^k h^{-1} -
\int_{\mathbb{S}^1} u \circ h \circ R_{\theta} d\theta \right\Vert + \\
 & + \left\Vert \int_{\mathbb{S}^1} u \circ h \circ  R_{\theta} d\theta - c \right\Vert < \varepsilon,
\end{align*}
so $h R_{\beta} h^{-1} \in \E(u, \varepsilon)$.
The conclusion is obtained taking a sequence of irrational $\beta_n \to \alpha$ because
$h R_{\beta_n} h^{-1} \xrightarrow{\C^{\infty}} h R_{\alpha} h^{-1} = R_{\alpha}$.
\end{proof}

In conclusion, $\overline{\G}$ contains strictly ergodic diffeomorphisms as long as it satisfies (H1) and (H3).

\section{Proofs of main theorems}\label{sec:proofs}

The strategy of the proofs of Theorems \ref{thm:symplectic} and \ref{thm:contact} is the
conjugation method which was explained in Section \ref{sec:conjugationmethod}.
There have been some marked points in the arguments, where hypothesis on the diffeomorphisms subgroups $\G$
were assumed and must be checked separately.

%

Theorem \ref{thm:symplectic} follows once it is shown that (H1) and (H2) are valid for the connected component of the
group of symplectic diffeomorphisms which contains the identity.
Analogously, to prove Theorem \ref{thm:contact} (H1) and (H3) must be shown to hold true for
the connected component of the identity in the group of contactomorphisms.
Notice, incidentally, that (H3) implies (H2).

\begin{definition}
The action of a group $G < \diff(M)$ is said to be $n$--transitive if for every $x_1, \ldots, x_n$ and
$y_1, \ldots, y_n$ there is $g \in G$ such that $g(x_i) = y_i$, $i = 1\ldots n$.
\end{definition}
The following result is well-known, see Boothby \cite{boothby} for a detailed proof.
\begin{theorem}\label{thm:transitivity}
The group of symplectic/contact diffeomorphisms acts $n$--transitively for any $n \ge 1$.
Furthermore, the same is true for the group of Hamiltonian symplectomorphisms and Hamiltonian contactomorphisms.
\end{theorem}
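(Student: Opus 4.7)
The plan is to induct on $n$, with the base case being a refined version of transitivity in which one can additionally prescribe a finite set of points to be fixed. Once this refinement is available, the inductive step is immediate: given $x_1,\ldots,x_n$ and $y_1,\ldots,y_n$, the inductive hypothesis produces some $\varphi$ with $\varphi(x_i)=y_i$ for $i<n$, and the refined statement applied to the pair $\varphi(x_n)\mapsto y_n$ with prescribed fixed set $\{y_1,\ldots,y_{n-1}\}$ completes the step. Since every map produced will be the time-one flow of a compactly supported Hamiltonian isotopy, the same proof works simultaneously for $\symp_0(M,\omega)$, $\ham(M,\omega)$, and (since all contact isotopies are Hamiltonian) for $\cont_0(M,\xi)$ and the Hamiltonian contactomorphism group.

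The refinement to establish is: given $x\neq y$ in $M$ and a finite set $P\subset M$ with $x,y\notin P$, there exists a compactly supported Hamiltonian symplectomorphism that sends $x$ to $y$ and equals the identity on a neighborhood of $P$. First I would join $x$ to $y$ by a smooth embedded arc $\gamma\colon[0,1]\to M\setminus P$; this is possible because $M$ is connected and, using $\dim M\ge 2$, any path may be perturbed to avoid the finite set $P$. Next, I would cover $\gamma$ by finitely many Darboux charts $(U_j,\omega_0)$, each disjoint from $P$, and pick $0=t_0<t_1<\cdots<t_N=1$ with $\gamma([t_{j-1},t_j])\subset U_j$. Within each chart, an explicit compactly supported Hamiltonian whose time-one flow translates $\gamma(t_{j-1})$ to $\gamma(t_j)$ can be written down (a linear Hamiltonian multiplied by a bump function); composing these time-one maps produces the desired symplectomorphism.

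For the contact case I would argue along the same lines using the contact Darboux theorem, so that locally $\xi=\ker(dz-\sum y_i\,dx_i)$, and compactly supported contact Hamiltonians generate time-one maps whose effect on a given point can be prescribed. One must separately treat displacements in the Reeb direction and in the transverse Legendrian directions, but in either case a bump-function construction produces a contact Hamiltonian supported in a small ball. As every contact isotopy is automatically Hamiltonian, $n$-transitivity for $\cont_0(M,\xi)$ and for the Hamiltonian contactomorphism group coincide.

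The main obstacle, and the only place where symplectic/contact techniques really enter, is the realisation of each elementary local translation by a \emph{compactly supported} symplectic or contact Hamiltonian: one must both produce the correct local model and ensure that the Darboux balls used along $\gamma$ can be shrunk sufficiently to avoid $P$ and, in the inductive application, the previously placed fixed points. This is routine but delicate in the contact setting, where the asymmetry between Reeb and Legendrian directions means the displacement must be effected in two stages rather than by a single translation Hamiltonian.
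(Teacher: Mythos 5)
The paper does not actually prove this theorem: it is quoted as well known and delegated to Boothby's paper \cite{boothby}. Your argument is correct and is, in substance, the standard proof that Boothby's citation stands in for; the only difference is organizational. You globalize the local ingredient by induction on $n$, strengthening the statement to allow a prescribed finite set $P$ on which the map is the identity, and you move one point at a time along an embedded arc avoiding $P$. Boothby instead considers the diagonal action on the configuration space of $n$-tuples of pairwise distinct points, uses the same local ingredient (a compactly supported Hamiltonian isotopy displacing a point slightly while supported in an arbitrarily small ball) to show that orbits are open, and concludes by connectedness of the configuration space. Both globalizations ultimately rest on the same fact that $\dim M \ge 2$, which you use to push the arc off $P$ and which Boothby uses for connectedness of the configuration space. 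Your version has the mild advantage of producing the map explicitly as a composition of time-one flows of compactly supported Hamiltonians, which makes the ``Hamiltonian'' half of the statement immediate; in the contact case this is automatic anyway since every contact isotopy is Hamiltonian, as you note.

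Two small points to make airtight. First, the statement implicitly requires the $x_i$ to be pairwise distinct and the $y_i$ to be pairwise distinct; in your inductive step you should observe that injectivity of $\varphi$ together with $x_n \ne x_i$ gives $\varphi(x_n) \notin \{y_1,\dots,y_{n-1}\}$, so the refined one-point statement really does apply with $P=\{y_1,\dots,y_{n-1}\}$. Second, when you cut off the translation Hamiltonian by a bump function, you must choose the bump equal to $1$ on a neighborhood of the entire segment traced by the point during the unit time interval, so that the trajectory of $\gamma(t_{j-1})$ under the truncated flow coincides with the genuine translation; this is exactly the ``routine but delicate'' point you flag, and it works as you describe in both the symplectic and contact Darboux models.
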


\medskip
\textbf{Hypothesis (H1):}
\emph{Any element of $\widehat{\G} = \G(\widehat{M})$ has a lift in $\G = \G(M)$.}
\medskip

This is obviously true for contactomorphisms and symplectomorphisms.
Recall that $\widehat{M}$ was defined as the quotient of $M$ under the free action of the group generated
by $R_{p/q}$.



\bigskip
\textbf{Hypothesis (H2):}
\emph{Given any locally free $\mathbb{S}^1$--action on a manifold $N$, and any open $V \subset N$, there exists an
element $f \in \G(N)$ such that $f^{-1}(V)$ meets all the orbits of the action.}
\medskip

It will be now checked that this hypothesis is verified in the case
$(N^{2n}, \omega)$ is a symplectic manifold, the action is symplectic and $\G(N) = \symp_0(N, \omega)$.

A Darboux flow box for a symplectic vector field $X$ is a Darboux chart $(U_i, \theta_i)$,
$$\theta_i \colon U_i \to Q(\delta, \rho) := [-\delta, \delta] \times [-\rho, \rho] \times B^2(\rho) \times \ldots B^2(\rho) \subset \R^{2n}$$ such that $\theta^*_i \bigl(\frac{\partial}{\partial x_1} \bigr) = X$.
Henceforth, $X$ will be the infinitesimal generator of the $\mathbb{S}^1$--action.
Darboux flow boxes do exist at any point of $M$.
The next statement follows from compactness of $M$.

\begin{lemma}\label{lem:flowboxes}
There exist $\varepsilon, r > 0$ and $\{(U_i, \vartheta_i)\}_{i = 1}^m$
a finite set of pairwise disjoint Darboux flow boxes such that
$\vartheta_i: U_i \to Q(\varepsilon, r + \varepsilon)$ and the union of the codimension--1 disks
$$\bigsqcup_{i=1}^m \vartheta_i^{-1}(\{0\}\times [-r,r] \times B^2(r) \times \ldots \times B^2(r))$$
touches all the orbits of $X$.
\end{lemma}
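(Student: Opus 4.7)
The argument proceeds in three stages: (i) produce a Darboux flow box at every point, (ii) extract a finite cover by compactness of the orbit space, (iii) perturb the finite family into a pairwise disjoint one while preserving the orbit--covering property.

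For (i), the $\mathbb{S}^1$--action being locally free implies that its infinitesimal generator $X$ is nowhere vanishing, and being symplectic implies that $\iota_X \omega$ is closed, hence locally exact. Combining the flow box theorem (rectifying $X$ to $\partial/\partial x_1$) with Darboux's theorem applied on a transverse slice produces around each point a chart in which $\omega$ is the standard symplectic form and $X = \partial/\partial x_1$. Compactness of $N$ then furnishes a uniform codomain $Q(\delta_0,\rho_0+\delta_0)$ for such a chart at every point. For (ii), the quotient $\overline{N} = N/\mathbb{S}^1$ is a compact orbifold and each Darboux flow box projects to an orbifold chart; a finite subfamily $\{(V_j,\phi_j)\}_{j=1}^m$ suffices for the projected transverse disks to cover $\overline{N}$, equivalently for the disks themselves to touch every orbit. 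I choose this cover fine enough that every pairwise overlap of the projected orbifold charts has small diameter in $\overline{N}$.

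The delicate stage is (iii). The key freedom is that replacing $\phi_j$ by $\phi_j \circ \Phi_X^{-s_j}$ produces a new Darboux flow box (because $\Phi_X^{s_j}$ is a symplectomorphism commuting with $X$) which touches exactly the same set of orbits as the original. Above each nonempty overlap of two orbifold charts the two transverse disks lift to local sections of the circle fibration $N \to \overline{N}$ differing by an $\mathbb{S}^1$--valued function $f_{jk}$; if the cover from (ii) is chosen fine enough, the image of $f_{jk}$ has diameter strictly less than the circumference of $\mathbb{S}^1$ and thus omits some open arc, so that shifting $\phi_j$ by any $s_j$ in that arc separates the two disks above the overlap. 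Iterating sequentially through the disks (at step $j$, picking $s_j$ to avoid the finitely many bad arcs arising from the overlaps with $D_1,\ldots,D_{j-1}$), I obtain pairwise disjoint transverse disks. A continuity argument then provides $\varepsilon > 0$ such that the full thickenings $\phi_j^{-1}(Q(\varepsilon, r+\varepsilon))$ remain pairwise disjoint, with $r > 0$ chosen via a Lebesgue number for the cover of $\overline{N}$ so that the radius--$r$ subdisks still meet every orbit. The main obstacle is precisely this disjointness argument; the technical subtlety at orbits with nontrivial but finite stabilizer, where $N \to \overline{N}$ becomes an orbifold principal $\mathbb{S}^1$--bundle and sections are defined only modulo the stabilizer, is handled by the same reasoning because the bad set for the shift parameter remains a proper subset of the relevant (possibly smaller) quotient circle.
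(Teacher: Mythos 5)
Your overall strategy is sound and is in fact more than the paper itself offers: the paper dismisses this lemma with ``follows from compactness of $M$'' and never addresses the disjointness requirement, which you correctly single out as the only delicate point. Stages (i) and (ii) are fine (the simultaneous normalization of $\omega$ and $X$ is the Carath\'{e}odory--Jacobi--Lie theorem applied to a local primitive of the closed form $\iota_X\omega$), and separating the transverse disks by flowing each one for a well--chosen time $s_j$ is the right idea; so is recovering disjointness of the full boxes from disjointness of the compact disks by shrinking $\varepsilon$.

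There is, however, one step in stage (iii) that does not work as written. At step $j$ you must choose $s_j$ outside the union of the bad arcs coming from the overlaps with $D_1,\dots,D_{j-1}$. You have arranged that each individual arc is a proper subset of $\mathbb{S}^1$, but a union of finitely many proper arcs can perfectly well cover the whole circle, and the number of arcs to be avoided grows with the number $m$ of charts, which itself grows as you refine the cover to shrink each arc --- so ``fine enough'' as stated is circular. The repair is standard but has to be said: choose the cover of $\overline{N}$ with uniformly bounded geometry (for instance, metric balls of radius $\epsilon$ centred at an $\epsilon$--separated net), so that the number $K$ of charts meeting any given chart is bounded independently of $\epsilon$; observe that the local sections cut out by your flow boxes have uniformly bounded derivatives, so each bad arc has length at most $C\epsilon$ with $C$ independent of $\epsilon$ (the constant also absorbing the bounded orders of the stabilizers, which multiply the number of arcs per overlap near exceptional orbits). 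Taking $\epsilon$ with $KC\epsilon<1$ keeps the bad set at every step of measure less than $1$, so a valid $s_j$ exists and your induction closes.
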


This lemma allows to work in a local fashion in $(\R^{2n}, \omega_0)$ so as to apply the following
squeezing result, which will be proved in Section \ref{sec:packing}.

\begin{proposition}\label{prop:folding}
Let $r > 0$, $D = \{0\} \times [-r, r] \times B^2(r) \times \ldots \times B^2(r) \subset \R^{2n}$
and $\varepsilon > 0$. For any $\delta > 0$, there exists a Hamiltonian symplectomorphism $\psi$
with support in
$Q(\varepsilon, r) = [-\varepsilon, \varepsilon] \times [-r-\varepsilon, r + \varepsilon] \times B^2(r + \varepsilon) \times
\ldots B^2(r + \varepsilon)$ such that $\psi(D) \subset P^{2n}(\delta, \ldots, \delta)$.
\end{proposition}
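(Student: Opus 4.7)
\textbf{Plan for the proof of Proposition \ref{prop:folding}.}

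The crucial observation is that $D$ has dimension $2n-1$, hence Lebesgue measure zero in $\R^{2n}$, so no Gromov non-squeezing obstruction applies: in principle $D$ can be squeezed into an arbitrarily small region by a symplectomorphism. The whole difficulty lies in realizing this squeezing by a single Hamiltonian isotopy compactly supported in the thin box $Q(\varepsilon, r)$.

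My approach is a symplectic folding/packing construction. First, partition each disk factor $B^2(r)$ in the pair $(x_j, y_j)$ for $j \ge 2$ into concentric annuli of area at most $\pi \delta^2$, giving of order $(r/\delta)^2$ annuli per factor; likewise partition the segment $[-r, r]$ in $y_1$ into subintervals of length at most $\delta$. Taking products yields finitely many boxes $B_1, \ldots, B_N$ covering $D$, each of symplectic ``width'' at most $\pi\delta^2$ in every coordinate pair. By Moser's argument, each $B_k$ is symplectomorphic to a subset of $B^2(\delta)^n$ via a map supported in a small neighborhood.

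To assemble these local squeezings into a single $\psi$ supported in $Q(\varepsilon, r)$, I would exploit the fact that $D \subset \{x_1 = 0\}$ leaves the entire interval $[-\varepsilon, \varepsilon]$ in the $x_1$-direction available as a stacking buffer. A Hamiltonian of the form $H(y_1, x_2, y_2, \ldots, x_n, y_n)$ shifts $x_1$ by a prescribed function of the other coordinates while leaving those coordinates fixed; I use this to separate the $B_k$ into pairwise disjoint slabs along the $x_1$-direction. On each slab I apply the corresponding Moser map squeezing $B_k$ into a copy of $B^2(\delta)^n$. A final area-preserving ``serpentine bending'' in the $(x_1, y_1)$-plane, permissible because the $(x_1, y_1)$-projection of the image has zero area, brings the first-pair projection of the whole image inside $B^2(\delta)$.

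The main obstacle is the geometric bookkeeping. Area preservation in a single pair $(x_j, y_j)$ cannot shrink $B^2(r)$ to $B^2(\delta)$, so the proof genuinely uses the four-dimensional coupling between the ``long direction'' $y_1$ and the ``free direction'' $x_1$; the fact that $D$ fills neither dimension independently is what makes this possible. The delicate points are (i) that each intermediate Hamiltonian remain supported in $Q(\varepsilon, r)$, even as $y_1$ ranges over the full interval $[-r, r]$ much larger than $\delta$, (ii) that the stacked slabs do not collide during or after the folding, and (iii) that after recombination the image lies in $P^{2n}(\delta, \ldots, \delta)$ rather than in some larger polydisk.
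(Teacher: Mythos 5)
Your overall strategy --- decompose $D$ into pieces of small symplectic size, separate them along the unused $x_1$-direction, squeeze each piece, and recombine --- is a genuinely different route from the paper's, which black-boxes Schlenk's symplectic folding lemma, rescales it so as to fold the $n-1$ thick disk factors one at a time (Lemma \ref{lem:sabana}), and disposes of the long $y_1$-interval with the linear Hamiltonian $H=-x_1y_1$, harmless precisely because $D$ sits at $x_1=0$. However, as written your plan contains a concrete error and defers exactly the hard steps. The error: a Hamiltonian $H(y_1,x_2,y_2,\ldots,x_n,y_n)$ does \emph{not} shift $x_1$ while leaving the remaining coordinates fixed. Its vector field is $\frac{\partial H}{\partial y_1}\partial_{x_1}+\sum_{j\ge 2}\bigl(\frac{\partial H}{\partial y_j}\partial_{x_j}-\frac{\partial H}{\partial x_j}\partial_{y_j}\bigr)$, so only $y_1$ is conserved. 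To separate boxes according to their annulus indices you would need $H$ to depend on the $z_j$ only through $\pi|z_j|^2$ (the induced rotations at least preserve your annuli), and the shift in $x_1$ is $\partial H/\partial y_1$, not $H$; neither repair appears in the proposal.

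The deferred steps are not routine bookkeeping. First, each local squeezing map, applied as a product with the identity, is supported on all of $\R^2$ in the $(x_1,y_1)$-factor, so its Hamiltonian must be cut off there to fit inside $Q(\varepsilon,r)$; the cutoff reintroduces motion of $y_1$ on the transition region, and one must verify the relevant points never enter it. Second, and more seriously, after stacking and squeezing, the $y_1$-coordinate of the image still ranges over all of $[-r,r]$: the entire burden of compressing a direction of length $2r$ into length $2\delta$, using only the width-$2\varepsilon$ budget available in $x_1$, falls on your final ``serpentine bending''. That map is asserted, not constructed, and the justification offered (the image projects to a set of zero, or small, area in the $(x_1,y_1)$-plane) does not by itself yield a Hamiltonian isotopy confined to $[-\varepsilon,\varepsilon]\times[-r-\varepsilon,r+\varepsilon]$ throughout its duration. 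This last step is comparable in difficulty to the folding lemma the paper cites, so the proposal, while a plausible plan, is not yet a proof.
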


Apply Lemma \ref{lem:flowboxes} to obtain $r, \varepsilon > 0$ and a family of Darboux flow boxes $\{(U_i, \vartheta_i)\}_{i =1}^m$.
By Theorem \ref{thm:transitivity}, there is a Hamiltonian symplectomorphism $\phi$ such that $\phi^{-1}(V)$ contains the centers
$\vartheta_i^{-1}(\0)$ of the flow boxes.
Let $\psi$ be the squeezing map given by Proposition \ref{prop:folding}. Define a Hamiltonian
symplectomorphism $\varphi$ in $N$ which is equal to $\vartheta_i \circ \psi \circ \vartheta_i^{-1}$ in
$U_i$, for any $1 \le i \le m$, and to the identity elsewhere.
Then $(\phi \circ \varphi)^{-1}(V)$ meets all the orbits of the $\mathbb{S}^1$--action.

\begin{remark}
Note that the map $\phi \circ \varphi$ in $\symp_0(N)$ realizing (H2) is actually a Hamiltonian symplectomorphism.
Ultimately, this boils down to the fact that both $n$--transitivity and Proposition \ref{prop:folding} are realized
by Hamiltonian symplectomorphisms.
\end{remark}

\bigskip
\textbf{Hypothesis (H3):}
\emph{Given a locally free $\mathbb{S}^1$--action $\{R_{\alpha}\}_{\alpha \in \mathbb{S}^1}$ on a manifold $N$, an open set $V \subset N$
and $\varepsilon > 0$, there exists an element $f \in \G(N)$ such that
$$\lambda(\{\alpha \in \mathbb{S}^1: R_{\alpha}(y) \notin f^{-1}(V)\}) < \varepsilon$$
for every $y \in N$ ($\lambda$ denotes the Lebesgue measure in $\mathbb{S}^1$ of total mass 1).}
\medskip

As was said before, there are some technicalities difficulties to overcome in the proof even in the case
$\G(N) = \diff(N)$ or $\diff_{\mu}(N)$ (see \cite{fathiherman}). Notice also that (H3) is out of reach
in the symplectic case. Indeed, the volume is always preserved under symplectic transformations so
it is not possible to modify $V$ to swallow ``most'' of all the orbits.

Henceforth, assume that $(N^{2n+1}, \xi)$ is a contact manifold, $R_{\alpha}$ is a contactomorphism
 for every $\alpha \in \mathbb{S}^1$ and $\G(N) = \cont_0(N, \xi)$.
Let $\{(U_j, \theta_j\colon U_j \to \theta_j(U_j) \subset \R^{2n+1})\}$ be a finite atlas of $N$ by Darboux charts,
that is, $\theta_j\colon (U_j, \xi_{|U_j}) \to (\R^{2n+1}, \xi_{0})$ is a contactomorphism.
The standard contact distribution in $\R^{2n+1}$ is denoted $\xi_0 = \ker(dz - \sum_{k=1}^n y_k dx_k)$.
The following packing result is well--known in the field of contact geometry,
a proof accessible to non--experts is presented in Section \ref{sec:packing}.
The term \emph{cuboid} is here used to name the closure of a domain in $\R^{2n+1}$ enclosed by
$2n+1$ couples of parallel hyperplanes in general position.

\begin{lemma}\label{lem:contactinflating}
Let $p \in \R^{2n+1}$, $V_p$ a neighborhood of $p$ and $Y$ be a vector field in $V_p$ such that $Y(p) \neq 0$.
There exists a cuboid $C$ centered at $p$ such that
\begin{itemize}
\item $C$ is contained in $V_p$,
\item $Y$ is transverse to the faces of $C$ and
\item for any neighborhood $W$ of $\partial C$ and any ball $B \subset C$ centered at $p$
it is possible to find a Hamiltonian contactomorphism $\varphi \colon (\R^{2n+1}, \xi_0) \to (\R^{2n+1}, \xi_0)$ such that:
\begin{enumerate}
\item $\supp(\varphi) \subset C$.
\item $\varphi(B)$ contains $C \setminus W$.
\end{enumerate}
\end{itemize}
\end{lemma}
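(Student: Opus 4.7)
The plan is two-pronged: choose the cuboid $C$ around $p$ so that it sits in $V_p$ with all faces transverse to $Y$, and then construct the inflating Hamiltonian contactomorphism $\varphi$ by cutting off the conformal scaling flow of $(\R^{2n+1}, \xi_0)$.

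For the first step, since $Y(p) \neq 0$, pick $2n+1$ linearly independent covectors $\eta_1, \ldots, \eta_{2n+1}$ at $p$ with $\eta_i(Y(p)) \neq 0$ for each $i$; this is possible because the covectors annihilating $Y(p)$ form a single hyperplane in $T_p^*\R^{2n+1}$. For $\delta > 0$ small, the cuboid $C := \{q : |\eta_i(q-p)| \le \delta \text{ for all } i\}$ lies in $V_p$, has faces transverse to $Y(p)$, and hence to $Y$ throughout $C$ by continuity. After translating and conjugating by a suitable contactomorphism (furnished by Theorem \ref{thm:transitivity}), I may assume $p = 0$ and, in the generic case $Y(p) \notin \xi_p$, that $C$ is the axis-aligned coordinate box $[-a,a]^{2n+1}$ in Darboux coordinates where $\alpha_0 = dz - \sum_k y_k \, dx_k$.

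For the second step, consider the conformal scaling vector field
\[
Z := \sum_{k=1}^n (x_k \partial_{x_k} + y_k \partial_{y_k}) + 2z \, \partial_z.
\]
A direct computation gives $L_Z \alpha_0 = 2\alpha_0$, so $Z$ is a contact vector field with contact Hamiltonian $H_0 := \alpha_0(Z) = 2z - \sum_k y_k x_k$ and flow the anisotropic dilation $\phi_t(x,y,z) = (e^t x, e^t y, e^{2t} z)$. The backward flow $\phi_{-t}$, $t \ge 0$, preserves each axis-aligned coordinate box centered at $0$ and contracts it uniformly to the origin as $t \to \infty$. Given $W$ and $B$, choose an axis-aligned sub-box $K$ with $C \setminus W \subset \interior(K) \subset K \subset \interior(C)$, which exists because $C \setminus W$ is compact and strictly interior to $C$. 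Let $\chi$ be a smooth function equal to $1$ on $K$ and vanishing outside $\interior(C)$, and define $\varphi$ as the time-$T$ map of the contact Hamiltonian flow of $\chi H_0$. Then $\supp(\varphi) \subset C$ by construction. On $\{\chi = 1\} \supset K$ the contact vector field of $\chi H_0$ coincides with $Z$ (since $d\chi \equiv 0$ there), so for any $q \in C \setminus W \subset K$ the backward trajectory $\varphi^{-t}(q)$ agrees with $\phi_{-t}(q)$ and remains in $K$ throughout $t \in [0, T]$. For $T$ large enough, $\phi_{-T}(K) \subset B$, giving $\varphi^{-1}(C \setminus W) \subset B$, i.e., $\varphi(B) \supset C \setminus W$.

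The main obstacle is arranging the cuboid $C$, whose shape is constrained by transversality to $Y$, to be compatible with an axis-aligned backward-invariant sub-box $K$ adapted to the anisotropic rates $(e^t, e^t, e^{2t})$ of $\phi_t$. In the generic case $Y(p) \notin \xi_p$ a contactomorphism aligns $C$ with the Darboux axes and the construction works directly. When $Y(p) \in \xi_p$, however, the Reeb direction cannot be made transverse to a coordinate face by any contactomorphism, forcing one face of $C$ to be tilted in the Reeb direction; one must then verify that a coordinate sub-box $K \subset \interior(C)$ still captures $C \setminus W$. Aside from this geometric bookkeeping, the proof is an explicit calculation with the scaling flow, and the fact that $\varphi$ is Hamiltonian is automatic because it is generated by a compactly supported contact Hamiltonian.
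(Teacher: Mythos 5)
Your construction is the same in spirit as the paper's: translate $p$ to the origin, use the contact dilation field $Z=\sum_k(x_k\partial_{x_k}+y_k\partial_{y_k})+2z\partial_z$ with Hamiltonian $\alpha_0(Z)$, and cut off that Hamiltonian near $\partial C$ so that the resulting flow inflates $B$ over $C\setminus W$. In the case where the cuboid can be taken to be the axis-aligned coordinate box your argument is complete and correct.

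However, there is a genuine gap in the remaining case, and it is precisely the case where the lemma has content. When some coordinate of $Y(p)$ vanishes (not only when $Y(p)\in\xi_p$: e.g.\ $Y(p)$ could be the Reeb direction, killing all $2n$ faces $\{x_i=\pm a\}$, $\{y_i=\pm a\}$), the cuboid $C$ must be tilted, and then the sub-box $K$ you rely on does not exist: for a tilted cuboid $C$ and a thin neighborhood $W$ of $\partial C$, the axis-aligned bounding box of the compact set $C\setminus W$ necessarily sticks out of $C$ (already in dimension $2$: rotate a square by $45^\circ$; any coordinate box containing two adjacent ``shaved'' corners of $C\setminus W$ contains a point far outside $C$). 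So the step ``one must then verify that a coordinate sub-box $K\subset\interior(C)$ still captures $C\setminus W$'' cannot be verified --- it is false. The correct repair, which is what the paper does, is to dispense with $K$ altogether: perturb only those defining $1$--forms $\lambda_i$ with $\lambda_i(Y(0))=0$ to nearby forms $\widetilde\lambda_i$, and check directly that $Z$ still points strictly outward along every face of the perturbed cuboid $\widetilde C_r$ (this is a small perturbation of the computation $\lambda_i(Z(v))=c_i\,\lambda_i(v)=c_i r>0$ on the face $\{\lambda_i=r\}$, with $c_i\in\{1,2\}$). Outward-pointing makes $\widetilde C_r$ itself backward-invariant under the dilation flow, so $\phi_{-s}(\widetilde C_r)$ plays the role of your $K$ and the rest of your cut-off argument goes through verbatim. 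A minor additional point: Theorem \ref{thm:transitivity} only moves finite sets of points and cannot be used to align $C$ with the Darboux axes; the reduction to $p=0$ should be done with the explicit affine contactomorphism $\tau(\x,\y,z)=(\x-\x_0,\y-\y_0,z-z_0+\x_0\cdot(\y-\y_0))$, and the alignment issue should be handled by the perturbation above rather than by a normalization of $Y(p)$.
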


For every $j$ and every $x \in U_j$
apply Lemma \ref{lem:contactinflating} to $\theta_j(x)$ and the vector field $(\theta_j)_*(X)$
to obtain a cuboid $Q^j_x$, centered at $x$, which is further assumed to lie within $\theta_j(U_j)$.
Define $C^j_x = \theta_j^{-1}(Q^j_x)$ and note that $X$ is transversal to $\partial C^j_x$.
Since $N$ is compact and $\{\interior(C^j_x): x \in U_j\}$ is an open cover of $N$,
there is a finite set of different points $\{p_i\}_{i = 1}^m$ whose associated $C_{p_i}$ cover $N$.
By transversality, any orbit of the action meets $\Delta = \bigcup_{i = 1}^m \partial C_{p_i}$ at a finite number of points.

\begin{lemma}\label{lem:skeleton}
If $W$ is a sufficiently small neighborhood of $\Delta$ then
$$\lambda(\{\alpha \in \mathbb{S}^1: R_{\alpha}(y) \in W\}) < \varepsilon$$
for every $y \in N$.
\end{lemma}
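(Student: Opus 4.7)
The strategy is to take $W$ to be the $\delta$-neighborhood $W_\delta$ of $\Delta$ with respect to any fixed auxiliary Riemannian metric on $N$, and to show that $\delta$ can be chosen small enough for the bound to hold uniformly in $y$. Two uniform estimates are needed: (i) a uniform upper bound $K$ on the number of crossings of any orbit with $\Delta$, and (ii) a constant $C>0$, uniform in the crossing point, such that any orbit spends at most $C\delta$ time in $W_\delta$ near each crossing. The conclusion will then follow from $K C \delta < \varepsilon$ for $\delta$ sufficiently small.

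For (i), fix $i$ and consider
\[
A_i = \{(y,\alpha)\in N\times \mathbb{S}^1 : R_\alpha(y)\in \partial C_{p_i}\}.
\]
The evaluation map $F(y,\alpha) = R_\alpha(y)$ is a submersion (already at the level of $DR_\alpha$), so $A_i$ is a compact codimension-one submanifold of $N\times \mathbb{S}^1$, of dimension $2n+1$. Transversality of $X$ to $\partial C_{p_i}$ implies that the projection $\pi\colon A_i \to N$ is itself a submersion: for $v\in T_y N$, decompose $DR_\alpha(v)\in T_qN$ using the splitting $T_qN = T_q\partial C_{p_i}\oplus \R X(q)$, and choose $s\in T_\alpha \mathbb{S}^1$ so as to cancel the $X$-component, obtaining $(v,s)\in T_{(y,\alpha)} A_i$ with $d\pi(v,s) = v$. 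Being a submersion between manifolds of equal dimension $2n+1$ with compact domain, $\pi$ is a proper local diffeomorphism, hence a finite covering onto its image. In particular $|\pi^{-1}(y)|$ is locally constant and bounded by some $K_i\in\N$; set $K=\sum_{i=1}^m K_i$.

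For (ii), compactness of $\Delta$ together with transversality of $X$ to each $\partial C_{p_i}$ yields a uniform lower bound $\kappa>0$ on the angle between $X$ and $T \partial C_{p_i}$ in the chosen metric. In a flow box in which $X$ is straightened to $\partial/\partial t$, the hypersurface $\partial C_{p_i}$ is the graph of a smooth function with bounded gradient, and $W_\delta$ is contained in the slab of $t$-height at most $2\delta/\kappa$ around it. Hence each orbit spends time at most $C\delta$ in $W_\delta$ near any single crossing, with $C = 2/\kappa$ uniform by compactness.

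The main obstacle is the uniformity in $y$ of both estimates, particularly (i): without the compactness of $A_i$ and the submersion property of $\pi$, the number of crossings could in principle be unbounded as $y$ varies, and no summation over crossings would make sense. Once the uniform bounds $K$ and $C$ are secured, choosing $\delta < \varepsilon/(KC)$ yields $\lambda(\{\alpha\in\mathbb{S}^1 : R_\alpha(y)\in W_\delta\}) \le K C \delta < \varepsilon$ for every $y\in N$, completing the proof.
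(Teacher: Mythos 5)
Your argument is correct in outline but takes a genuinely different, more quantitative route than the paper. The paper's proof is soft: for each fixed $y_0$ the orbit meets $\Delta$ in finitely many points; one chooses a small neighborhood $U$ of these points so that (by local freeness of the action) every orbit spends $\lambda$--time less than $\varepsilon$ in $U$, then shrinks $W$ so that $W\cap\mathcal{O}(y_0)\subset U$, observes that this persists for $y$ in a neighborhood of $y_0$, and finishes by compactness of $N$. No uniform bound on the number of crossings is ever produced there. Your proof instead establishes the stronger quantitative estimate $\lambda(\{\alpha: R_\alpha(y)\in W_\delta\})\le KC\delta$, which gives an explicit rate in $\delta$ at the cost of having to prove the two uniform bounds; the evaluation--map device $A_i=F^{-1}(\partial C_{p_i})$ for the crossing bound is a clean, self-contained idea that the paper simply does not need.

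Two steps as written need repair, both stemming from the fact that $\partial C_{p_i}$ is the boundary of a \emph{cuboid} and hence not a smooth closed hypersurface: it is a union of compact faces meeting along corners. Consequently $A_i$ is not a closed codimension--one submanifold, $\pi$ is not a covering map, and $|\pi^{-1}(y)|$ is not locally constant (it can jump over points whose orbit meets an edge); likewise the ``angle between $X$ and $T\partial C_{p_i}$'' is undefined along the corners. The fix is to argue face by face: each closed face lies in an affine hyperplane of the Darboux chart, transversality of $X$ extends to a slightly enlarged compact piece $\tilde{f}$ of that hyperplane, and the uniform bound on the number of crossings of any orbit with $\tilde{f}$ follows from a flow--box argument (there is $\tau>0$ such that no orbit crosses $\tilde{f}$ twice within time $\tau$, so at most $1/\tau$ crossings per face). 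The same enlargement is needed in your step (ii): an orbit can enter $W_\delta$ by passing within $\delta$ of the edge of a face without ever crossing $\Delta$, so the intervals of length $C\delta$ you sum over must be centered at crossings with the enlarged faces $\tilde{f}$, not with $\Delta$ itself. With $K$ redefined as the uniform bound on crossings with the $\tilde{f}$'s, the final inequality $KC\delta<\varepsilon$ goes through.
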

\begin{proof}
For any $y_0 \in N$, the orbit $\mathcal{O}(y_0) = \{R_{\alpha}(y_0): \alpha \in \mathbb{S}^1\}$ meets $\Delta$ in finitely many points,
$R_{\alpha_1}(y_0), \ldots, R_{\alpha_k}(y_0)$. Take a neighborhood $U$ of the union of these points
small enough so that $\lambda(\{\alpha \in \mathbb{S}^1: R_{\alpha}(y) \in U\}) < \varepsilon$ for every $y \in N$.
Clearly, if $W^{y_0}$ is a sufficiently small neighborhood of $\Delta$, $W^{y_0} \cap \mathcal{O}(y_0) \subset U$
and the same is true for orbits of points in a neighborhood of $y_0$. A compactness argument yields the result.
\end{proof}

Fix $W$ from the previous lemma and note that to conclude (H3) it is enough to inflate $V$ to cover $N \setminus W$.
This strategy splits into two steps. Firstly, choose $s$ different points $\{q_i\}$ in $V$. By Theorem \ref{thm:transitivity}
there is a Hamiltonian contactomorphism $\phi\colon (N, \xi) \to (N, \xi)$ such that $\phi(p_i) = q_i$.
Then, $\{p_i\} \subset \phi^{-1}(V)$.
Let $B_i$ be a small ball centered at $p_i$ and contained in $\phi^{-1}(V)$.

Recall from Lemma \ref{lem:contactinflating} the properties of $Q^i_x$, which are inherited by
$C_{p_i}$. In particular, there are Hamiltonian contactomorphisms $\varphi_i\colon (N, \xi) \to (N, \xi)$
supported in $C_{p_i}$ such that $\varphi_i(B_i) \supset C_{p_i} \setminus W$.
Consider
$\varphi = \varphi_1 \circ \ldots \circ \varphi_m.$

\begin{lemma}
$$\varphi \left( \cup_{i = 1}^s B_{i} \right) \cup W = N.$$
\end{lemma}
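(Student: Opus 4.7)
The plan is to establish, by induction on $k \in \{0, 1, \ldots, m\}$, the single containment
\[
\Phi_k\Bigl(\bigcup_{i=1}^m B_i\Bigr) \cup W \supset \bigcup_{j=1}^k C_{p_j},
\]
where $\Phi_k := \varphi_1 \circ \cdots \circ \varphi_k$ and $\Phi_0 := \id$. Taking $k = m$ is then exactly the lemma, since $\bigcup_{j=1}^m C_{p_j} = N$ and $\varphi = \Phi_m$.

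Before starting the induction I would tighten the choice of the balls, insisting that in addition to $B_i \subset \phi^{-1}(V)$ one has $B_i \cap W = \emptyset$ for every $i$. This costs nothing: each center $p_i$ lies in the interior of $C_{p_i}$ and, after a generic perturbation of the cover, sits outside $\Delta = \bigcup_i \partial C_{p_i}$, so a ball of sufficiently small radius around $p_i$ avoids the neighborhood $W$ of $\Delta$.

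The base case $k=0$ is trivial. For the inductive step the crucial one-step inclusion is
\[
\varphi_{k+1}\Bigl(\bigcup_i B_i\Bigr) \supset \Bigl(\bigcup_i B_i\Bigr) \cup (C_{p_{k+1}} \setminus W).
\]
Indeed, $\varphi_{k+1}$ is the identity on $N \setminus C_{p_{k+1}}$ and therefore fixes $(\bigcup_i B_i) \setminus C_{p_{k+1}}$; meanwhile the disjointness $B_i \cap W = \emptyset$ forces $(\bigcup_i B_i) \cap C_{p_{k+1}} \subset C_{p_{k+1}} \setminus W$, and $C_{p_{k+1}} \setminus W \subset \varphi_{k+1}(B_{k+1})$ by the inflating property inherited from Lemma \ref{lem:contactinflating}. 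Applying $\Phi_k$ to the one-step inclusion and using that $\Phi_k$ is the identity off $\bigcup_{j \leq k} C_{p_j}$, together with the inductive hypothesis $\Phi_k(\bigcup_i B_i) \cup W \supset \bigcup_{j \leq k} C_{p_j}$, one obtains
\[
\Phi_{k+1}\Bigl(\bigcup_i B_i\Bigr) \cup W \supset \bigcup_{j \leq k} C_{p_j} \cup (C_{p_{k+1}} \setminus W) \cup W = \bigcup_{j \leq k+1} C_{p_j} \cup W,
\]
which closes the induction.

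The only point that requires genuine care is securing the auxiliary disjointness $B_i \cap W = \emptyset$; after that is arranged, the remainder is set-theoretic bookkeeping driven by the fact that $\varphi_i$ is supported in $C_{p_i}$ and $\Phi_k$ in $\bigcup_{j \leq k} C_{p_j}$.
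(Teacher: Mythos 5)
Your proof is correct, and it is in fact more careful than the paper's own on the one point that requires care, namely that $\varphi$ is a \emph{composition} of the $\varphi_i$ rather than a union of maps with disjoint supports. The paper argues via the dichotomy that for $p\in\supp(\varphi_i)$ either $\varphi_i(p)\in W$ or $p\in B_i$ (which follows from $\varphi_i(B_i)\supset C_{p_i}\setminus W$ and injectivity) and then invokes the covering $\bigcup_i C_{p_i}=N$; read literally this yields $\bigcup_i \varphi_i(B_i)\cup W=N$, and passing from that to the statement about $\varphi=\varphi_1\circ\cdots\circ\varphi_m$ is precisely the bookkeeping your induction on the partial compositions $\Phi_k$ supplies. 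Your one-step inclusion $\varphi_{k+1}\bigl(\bigcup_i B_i\bigr)\supset\bigl(\bigcup_i B_i\bigr)\cup(C_{p_{k+1}}\setminus W)$ is the right intermediate statement, and the auxiliary condition $B_i\cap W=\emptyset$ that you impose is genuinely needed for it: since $\varphi_{k+1}$ maps $C_{p_{k+1}}\setminus B_{k+1}$ into $C_{p_{k+1}}\cap W$, a ball $B_i$ sitting inside $W\cap C_{p_{k+1}}$ and disjoint from $B_{k+1}$ could fall out of $\varphi_{k+1}\bigl(\bigcup_j B_j\bigr)$, breaking the inclusion $\varphi_{k+1}\bigl(\bigcup_i B_i\bigr)\supset\bigcup_i B_i$ on which your induction relies. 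Arranging $B_i\cap W=\emptyset$ is harmless, as you note: choose the finite subcover so that no center $p_i$ lies on $\Delta$, then shrink $W$ (Lemma \ref{lem:skeleton} only improves) and finally the $B_i$. In short, your route buys a complete, checkable argument at the cost of one genericity remark, whereas the paper's version leaves the composition step implicit. (A cosmetic point: you take the number of balls equal to $m$, which is consistent with the construction; the $s$ in the statement should indeed be $m$.)
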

\begin{proof}
For a point $p$ in the support of $\varphi_i$ there are two non--exclusive possibilities:
$\varphi_i(p) \in W$ or $p \in B_{i}$. The conclusion follows from the fact that any point of $N$
belongs to some $C_{p_i}$ hence to the support of one or more $\varphi_i$.
\end{proof}

As a consequence, $\varphi \circ \phi^{-1} (V)$ contains $N \setminus W$. Thus, Lemma \ref{lem:skeleton} concludes (H3).

\begin{remark}
Notice that the map $\varphi \circ \psi$ is the composition of two Hamiltonian contactomorphisms.
\end{remark}

\section{Packing lemmas}\label{sec:packing}

This section discusses the two packing results (Proposition \ref{prop:folding} for (H2) in the symplectic case and
Lemma \ref{lem:contactinflating} for (H3) in the contact case) which eventually led to the proofs of the main theorems.

\subsection{Contact}

In the contact case, the goal is to construct a ``box'' within which any small ball may be inflated
(by a contact transformation) to take up all the space inside but a small margin. This is an easy consequence
of the basic fact that special dilations preserve the standard contact structure.

\begin{lemmacontactinflating}
Let $p \in \R^{2n+1}$, $V_p$ a neighborhood of $p$ and $Y$ be a vector field in $V_p$ such that $Y(p) \neq 0$.
There exists a cuboid $C$ centered at $p$ such that
\begin{itemize}
\item $C$ is contained in $V_p$,
\item $Y$ is transverse to the faces of $C$ and
\item for any neighborhood $W$ of $\partial C$ and any ball $B \subset C$ centered at $p$
it is possible to find a Hamiltonian contactomorphism $\varphi \colon (\R^{2n+1}, \xi_0) \to (\R^{2n+1}, \xi_0)$ such that:
\begin{enumerate}
\item $\supp(\varphi) \subset C$.
\item $\varphi(B)$ contains $C \setminus W$.
\end{enumerate}
\end{itemize}
\end{lemmacontactinflating}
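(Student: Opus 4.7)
The strategy rests on the fact that the non-isotropic dilations $\delta_\lambda(x,y,z) = (\lambda x, \lambda y, \lambda^2 z)$ satisfy $\delta_\lambda^* \alpha_0 = \lambda^2 \alpha_0$ for $\alpha_0 = dz - \sum_k y_k\, dx_k$, so they are contactomorphisms of $(\R^{2n+1}, \xi_0)$. They are the flow of the contact vector field $X = \sum_k x_k \partial_{x_k} + \sum_k y_k \partial_{y_k} + 2z\, \partial_z$, whose contact Hamiltonian with respect to $\alpha_0$ is $H_0(x,y,z) = 2z - \sum_k y_k x_k$. After translating so that $p = 0$, I would take $C$ to be a small axis-aligned closed rectangular cuboid centered at $0$ and contained in $V_p$; in the non-generic event that $Y(0)$ is tangent to some face of such a cuboid, I first conjugate by a linear contactomorphism of $(\R^{2n+1}, \xi_0)$ that moves $Y(0)$ to a position with all coordinates nonzero, preserving axis-alignment. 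The key feature of an axis-aligned $C$ is its \emph{dilation invariance}: $\delta_\mu(C) \subset C$ for every $\mu \in [0,1]$, because each coordinate of $\delta_\mu(p)$ is smaller in absolute value than the corresponding coordinate of $p$.

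Given the ball $B$ and the neighborhood $W$ of $\partial C$, I would pick $\lambda > 1$ large enough that $\delta_{1/\lambda}(C \setminus W) \subset B$; this is possible because $C \setminus W$ is compact and $\delta_{1/\lambda}$ contracts it toward $0$. Next, define
\[
K \ =\ \bigcup_{\mu \in [1/\lambda, 1]} \delta_\mu(C \setminus W),
\]
which is compact and, using dilation-invariance of $C$ together with $\overline{C \setminus W} \subset \interior(C)$, is contained in $\interior(C)$. Choose a smooth cutoff $\rho \colon \R^{2n+1} \to [0,1]$ equal to $1$ on an open neighborhood $U$ of $K$ and vanishing outside a compact subset of $\interior(C)$. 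Let $\varphi$ denote the time-$(\log\lambda)$ map of the flow of the contact Hamiltonian $H = \rho \cdot H_0$. By construction $\varphi$ is a Hamiltonian contactomorphism with $\supp(\varphi) \subset C$, which is exactly property (1).

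For (2), the crucial point is that on $U$ we have both $\rho \equiv 1$ and $d\rho \equiv 0$; hence $H = H_0$ and $dH = dH_0$ pointwise on $U$, so the contact vector field $X_H$ coincides with $X$ throughout $U$. For any $p_0 \in \delta_{1/\lambda}(C \setminus W) \subset U$, the dilation trajectory $t \mapsto \delta_{e^t}(p_0)$, $t \in [0, \log\lambda]$, is entirely contained in $K \subset U$, and by uniqueness of ODE solutions it therefore agrees with the $X_H$-trajectory issued from $p_0$. Consequently $\varphi(p_0) = \delta_\lambda(p_0)$, so $\varphi\bigl(\delta_{1/\lambda}(C \setminus W)\bigr) = C \setminus W$, and since $\delta_{1/\lambda}(C \setminus W) \subset B$ we conclude $\varphi(B) \supset C \setminus W$. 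The main technical hurdle is arranging the simultaneous geometric compatibility of three constraints---$Y$-transversality of $\partial C$, compact support of $\varphi$ inside $C$, and dilation-invariance of $C$ so that the key trajectories stay in $\{\rho = 1\}$; this is precisely what forces the axis-aligned choice of $C$ (after the preliminary linear adjustment) rather than an arbitrary transverse cuboid.
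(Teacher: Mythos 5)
Your construction is essentially the paper's: translate $p$ to the origin, use the contact dilations $\delta_\lambda$ generated by the contact vector field with Hamiltonian $\alpha_0(X)=2z-\sum_k x_ky_k$, and cut off the Hamiltonian inside $C$ so that the flow still carries a small ball over $C\setminus W$. Your treatment of the cut-off is in fact more careful than the paper's (you verify that the relevant trajectories never leave the region where the cut-off is identically $1$, a point the paper passes over quickly). However, there is a genuine gap in the step where you arrange transversality of $Y$ to $\partial C$.

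You propose to keep $C$ axis-aligned and, if $Y(0)$ is tangent to some face, to conjugate by a linear contactomorphism of $(\R^{2n+1},\xi_0)$ that moves $Y(0)$ to a vector with all coordinates nonzero. This cannot work in general: any contactomorphism fixing the origin has differential preserving the contact hyperplane $\xi_0|_0=\ker(dz|_0)$, so if $Y(0)\in\xi_0|_0$ (i.e.\ $dz(Y(0))=0$, a case the lemma must cover since only $Y(p)\neq 0$ is assumed), then the pushforward of $Y(0)$ under any such map still has vanishing $dz$-component and remains tangent to the faces $\{z=\pm r\}$. The correct fix --- and the one the paper uses --- is to perturb the defining $1$-forms of the cuboid instead: replace any $\lambda_i$ with $\lambda_i(Y(0))=0$ by a nearby linear form $\tilde\lambda_i$ with $\tilde\lambda_i(Y(0))\neq 0$. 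The property you actually need from axis-alignment, namely $\delta_\mu(C)\subset \interior(C)$ for $\mu<1$, is equivalent to the dilation field $X$ pointing outward through the faces of $C$, and this is an open condition: it survives a small tilt of the $\tilde\lambda_i$ provided the cuboid is taken small enough. With that substitution the rest of your argument (the compact set $K$, the cut-off, the uniqueness-of-ODE comparison) goes through verbatim.
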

\begin{proof}
If $p = (\x_0, \y_0, z_0)$, the affine map
\[
\tau(\x, \y, z) = (\x - \x_0, \y - \y_0, z - z_0 + \x_0 \cdot (\y - \y_0))
\]
is a contactomorphism in $(\R^{2n+1}, \xi_{0})$ which maps $p$ to the origin.
Thus, assume without lose of generality $p = \0$.

The vector field $V(\x, \y, z) = (\x, \y, 2z)$ is a contact vector field because its flow
$\psi_t(\x, \y, z) = (e^t\x, e^t\y, e^{2t}z)$ preserves the standard contact structure $\xi_{0}$.
Denote $H_V$ the contact Hamiltonian associated to $V$.
Let $C_r$ be the cuboid of size $r > 0$ centered at $\0$ and
generated by the set of linear 1--forms $\{\lambda_1 = dx_1, \ldots, \lambda_{2n+1} = dz\}$, that is,
\[
C_r = \{v \in \R^{2n+1} : |\lambda_i(v)| < r \enskip \forall i\}.
\]
A computation shows that $V$ points outwards $C_{r}$.
As a consequence, the image under the flow $\psi_t$ of any neighborhood $B$ of the origin eventually covers $C_{r}$.
Note that the statement remains valid as long as the vector field $V$ points outwards every such cuboid.
In case $\lambda_i(Y(\0)) = 0$, replace $\lambda_i$ by a sufficiently close linear 1--form $\tilde{\lambda}_i$.
For $r_0 > 0$ sufficiently small, $V$ still point outwards the modified cuboids
$\widetilde{C}_r = \{v \in \R^{2n+1} : |\widetilde{\lambda}_i(v)| < r \enskip \forall i\}$
and the faces of $\widetilde{C}_r$,
$\partial \widetilde{C}_r$, are transversal to $Y$ if $r \le r_0$. Take $C = \widetilde{C}_{r_0}$.

Fix now a ball $B$ centered at \0 and a neighborhood $W$ of $\partial C$. As was noticed before,
$\psi_t(B) \subset C \setminus W$ for large $t$.
Consider now a smooth function $H\colon \R^{2n+1} \to \R$ equal to $H_V$ inside $C \setminus W$ which vanishes outside $C$.
The contact vector field $V'$ associated to $H$ is then equal to $V$ inside $C \setminus W$ and vanishes outside $C$.
Consequently, the flow $\varphi_t$ generated by $V'$ satisfies the properties in the statement.
\end{proof}

\subsection{Symplectic}

This subsection contains the proof of Proposition \ref{prop:folding}, that is,
it is devoted to show how to squeeze a large codimension--1 disk into a small ball in a symplectic fashion.
Denote by $B^2(r)$ the closed 2--ball of radius $r$ and $P^{2n}(r_1, \ldots, r_n) = B^2(r_1) \times \ldots \times B^2(r_n)$.
The proof presented here adapts the following non--trivial result to answer the question.

\begin{lemma}\label{lem:folding}
Given $s, \rho > 0$ there exists $\eta = \eta(s, \rho) > 0$ and a Hamiltonian symplectomorphism $\phi$
such that $\phi$ embeds $B^2(\eta) \times B^2(s)$ into $B^2(\rho) \times B^2(1)$.
Furthermore, $\phi$ can be assumed to be supported in $B^2(c \rho) \times B^2(s + c)$ for a constant $c > 1$
independent of $s, \rho$.
\end{lemma}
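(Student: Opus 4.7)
I would prove Lemma \ref{lem:folding} by the classical symplectic folding technique of Lalonde--McDuff and Traynor. A volume comparison gives $\pi^2 \eta^2 s^2 \le \pi^2 \rho^2$ as the evident obstruction, so one expects $\eta$ proportional to $\rho/s$; the real content is to realize the embedding by a compactly supported Hamiltonian isotopy with control on the support in each factor.

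The first step would be a reduction to a rectangular model. Using an area-preserving diffeomorphism of $\R^2$ supported in a slightly larger disk $B^2(s + c_0)$, replace the second factor $B^2(s)$ by a long thin rectangle $R = [0, L] \times [0, w]$ with $L w = \pi s^2$ and $w$ arbitrarily small. The problem becomes that of embedding $B^2(\eta) \times R$ into $B^2(\rho) \times B^2(1)$ with the prescribed support. Next, one subdivides $R$ along its long direction into $N \approx s^2/\pi$ sub-rectangles $R_1, \ldots, R_N$ of area at most $\pi$ each, so that each $R_i$ embeds area-preservingly into $B^2(1)$. The task reduces to folding each slab $B^2(\eta) \times R_i$ onto a column $A_i \times B^2(1)$, where the $A_i \subset B^2(\rho)$ are pairwise disjoint pieces of total area at most $\pi \rho^2$.

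Each fold is built from Hamiltonian shears: a Hamiltonian of the form $H = f(y_2)$ generates a shear of the $x_1$--coordinate by $f'(y_2)$, while $H = g(x_1)$ generates a shear of $y_2$; composing these with smooth cut-off ``turn'' functions realizes the desired folding motion as a genuine Hamiltonian isotopy of $\R^4$. A single fold is supported in a bounded neighborhood of the corresponding slice in the second factor and within $B^2(c \rho)$ in the first factor, so the union of the $N$ individual supports sits inside $B^2(c\rho) \times B^2(s + c)$ for a universal constant $c > 1$ independent of $s$ and $\rho$.

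The main obstacle I anticipate is tracking the supports carefully through the $N$ consecutive folds so that successive folds do not interfere with each other and the constant $c$ stays uniform, together with smoothing the cusped fold boundaries produced by the turn functions to keep everything within the claimed compact set. Modulo these technical issues, which are standard in the symplectic folding literature, the construction goes through.
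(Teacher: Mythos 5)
Your overall strategy --- symplectic folding/wrapping realized by explicit, compactly supported Hamiltonian isotopies --- is exactly the route the paper takes. In fact the paper does not prove Lemma \ref{lem:folding} from scratch either: it defers to Schlenk \cite[Remark 3.3.1]{schlenk}, observing that each step of the folding construction is induced by a Hamiltonian flow and that all deformations are local except for the stretching in the base, whose cost (an amount of room proportional to the fibre size) is what the constant $c$ absorbs. Your volume heuristic $\eta \lesssim \rho/s$ and the picture of cutting the large factor into $N \approx s^2$ pieces of area at most $\pi$ and stacking them over disjoint regions $A_i \subset B^2(\rho)$ are the right ones.

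However, your first reduction step fails as stated, and it fails precisely at the point that carries the content of the lemma, namely the support bound $B^2(c\rho)\times B^2(s+c)$ with $c$ independent of $s$. An area-preserving diffeomorphism of $\R^2$ supported in $B^2(s+c_0)$ maps $B^2(s)$ into $B^2(s+c_0)$, so it cannot take it onto a rectangle $[0,L]\times[0,w]$ with $Lw=\pi s^2$ and $w$ arbitrarily small: then $L=\pi s^2/w > 2(s+c_0)$. All of your subsequent support bookkeeping rests on this impossible step; once the second factor has been stretched to length $\pi s^2/w$, the intermediate stages of the isotopy leave $B^2(s+c)$ by an amount growing with $s$, and no universal constant recovers the claimed support. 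The repair is to subdivide $B^2(s)$ \emph{in place} into pieces of area at most $\pi$ and move each piece into $B^2(1)$ individually --- a contraction toward the centre of the second factor compensated by the expansion $B^2(\eta)\to A_i$ in the first --- so that every second-factor motion stays inside $B^2(s)$ up to the bounded turning room needed for each lift, which is proportional to the fibre size and hence uniform. (A smaller point: the shears are miscomputed; with $\omega=\sum dx_i\wedge dy_i$ the Hamiltonian $H=f(y_2)$ generates a shear of $x_2$, not of $x_1$, and the lifting moves genuinely couple the two factors, using Hamiltonians of the form $f(x_2)\,y_1$ and the like.)
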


There exist several approaches to this result in the literature. One could use the $h$--principle for isosymplectic embeddings to
obtain an embedding of the disk $B^2(s)$ and then extend it to a neighborhood thanks to the Symplectic Neighborhood Theorem.
The parametric version of this symplectic embedding theorem (see \cite[Section 12.1]{eliashberg}) would provide the result.
However, we prefer a more hands--on approach using symplectic folding.
Following Schlenk (\cite[Remark 3.3.1]{schlenk}), the nature of this deformation is local
and each step of the construction is induced by a Hamiltonian flow.
Indeed, a careful look through the folding shows that all deformations take place inside an arbitrary neighborhood
of the figures apart from the stretching in the base, where an extra space proportional to the size of the fibers is required.
The constant $c$ in the lemma is introduced to make up for it.

For our purpose, the extra space around the codimension--1 disk in which the transformation is supported must be
arbitrarily small. The following technical lemma shows how to
scale properly Lemma \ref{lem:folding} and to fold the disk in $n-1$ directions.

\begin{lemma}\label{lem:sabana}
Given $r, \varepsilon > 0$, for every $0 < \delta < \varepsilon$, there exists $\sigma > 0$ and a Hamiltonian
symplectomorphism $\varphi$ such that $\varphi$ embeds $P^{2n}(\sigma, r, \ldots, r)$
into $P^{2n}(\delta, \ldots, \delta)$ and the support of $\varphi$ is contained in
$P^{2n}(\varepsilon, r+\varepsilon, \ldots r+\varepsilon)$.
\end{lemma}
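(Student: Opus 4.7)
The plan is to derive Lemma \ref{lem:sabana} from $n-1$ successive applications of a suitably scaled Lemma \ref{lem:folding}, each one collapsing a single factor $B^2(r)$ into a $B^2(\delta)$--factor while using the first coordinate plane as a ``sink.'' A basic observation that will be used repeatedly is that if $\phi\colon \R^4 \to \R^4$ is a Hamiltonian symplectomorphism, then so is $\phi_\mu(z) := \mu\, \phi(z/\mu)$ for any $\mu > 0$. Applied to Lemma \ref{lem:folding}, this yields for each $\mu>0$ a Hamiltonian embedding of $B^2(\mu\eta) \times B^2(\mu s)$ into $B^2(\mu\rho) \times B^2(\mu)$ supported in $B^2(c\mu\rho) \times B^2(\mu(s+c))$.

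The first step is to fix the scaling parameter $\mu := \min\bigl(\delta,\, \varepsilon/(2c)\bigr)$, so that simultaneously $\mu \le \delta$ and $c\mu \le \varepsilon/2$. Using that $\eta(s,\rho) > 0$ for every $s,\rho > 0$, I then define recursively an increasing sequence of positive reals $\sigma_0 < \sigma_1 < \cdots < \sigma_{n-1}$: pick $\sigma_{n-1} \in (0, \mu]$ arbitrarily and, for $k = n-1, n-2, \ldots, 1$, set
\[
\sigma_{k-1} \;=\; \mu \cdot \eta\!\left(\tfrac{r}{\mu},\, \tfrac{\sigma_{k}}{\mu}\right).
\]
Finally let $\sigma := \sigma_0 > 0$. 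For each $k = 1, \ldots, n-1$ I would then apply the scaled Lemma \ref{lem:folding} with parameters $s = r/\mu$ and $\rho = \sigma_k/\mu$ on the pair of coordinate planes $(1, k+1)$, extending by the identity on the remaining coordinates. This yields a Hamiltonian symplectomorphism $\varphi_k$ of $\R^{2n}$ which embeds $B^2(\sigma_{k-1}) \times B^2(r) \subset B^2(\mu\eta) \times B^2(\mu s)$ (in coordinates $(1, k+1)$) into $B^2(\sigma_k) \times B^2(\mu)$, and whose support (in those two coordinates) lies in $B^2(c\sigma_k) \times B^2(r + c\mu)$.

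Setting $\varphi := \varphi_{n-1} \circ \cdots \circ \varphi_1$, an inductive tracking of the image of $P^{2n}(\sigma, r, \ldots, r)$ under the successive folds shows that it finally sits inside $B^2(\sigma_{n-1}) \times B^2(\mu)^{n-1} \subset P^{2n}(\delta, \ldots, \delta)$. For the support, observe that each coordinate plane $k+1$ ($k \ge 1$) is touched non--trivially only by $\varphi_k$, while coordinate $1$ is touched by all of them; the union of supports is therefore contained in $B^2(c\sigma_{n-1}) \times B^2(r+c\mu)^{n-1}$, and the inequalities $c\sigma_{n-1} \le c\mu \le \varepsilon/2$ and $r + c\mu \le r + \varepsilon/2$ guarantee $\supp(\varphi) \subset P^{2n}(\varepsilon, r+\varepsilon, \ldots, r+\varepsilon)$.

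The main obstacle in this scheme is precisely the constant $c > 1$ produced by Lemma \ref{lem:folding}: the support of each fold overshoots its image in the first direction by a factor of $c$, so one cannot simply rescale by $\delta$ (which would only give support radius $c\delta$, possibly $>\varepsilon$). The cure is to shrink the scaling factor to $\mu = \min(\delta, \varepsilon/(2c))$, which safely accommodates the support constraint but forces $\sigma$ to be chosen extremely small, since at each of the $n-1$ iterations the first--coordinate radius must be driven down by the tiny factor $\eta(r/\mu, \cdot)$ coming from the folding lemma.
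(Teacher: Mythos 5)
Your argument is essentially the paper's: iterate a rescaled Lemma \ref{lem:folding} once in each of the $n-1$ thick directions, always pairing with the first coordinate plane, and choose the scale so that both the target radius and the $c$--inflated support radius fit under $\delta$ and $\varepsilon$ respectively. The only cosmetic differences are that you use a single scaling factor $\mu$ for every fold where the paper uses a decreasing sequence $\lambda_k$ (with $\rho_{k+1}=\eta(s_k,\rho_k)$ playing the role of your backward recursion for $\sigma_k$), and that your composition order is reversed; both choices work.

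One step does need repair. The map obtained by letting the scaled fold act on the coordinate pair $(1,k+1)$ and ``extending by the identity'' on the other factors has support equal to $\supp(\widehat{\phi}_k)\times\R^{2n-4}$, which is unbounded in the untouched coordinate planes; so your assertion that the union of the supports is contained in $B^2(c\sigma_{n-1})\times B^2(r+c\mu)^{n-1}$ is not literally true as stated. The paper handles this by cutting off the generating Hamiltonian so that the resulting $\varphi_k$ coincides with the product map on the polydisk region actually being tracked and is genuinely supported in $P^{2n}(\varepsilon, r+\varepsilon,\ldots,r+\varepsilon)$. The same one-line fix applies verbatim to your construction, after which the proof is complete.
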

\begin{proof}
The goal is to find $\sigma$ so that the result of folding in each of the $n-1$ ``thick'' directions the thin polydisk
is contained into the target cube. The squeezing map of Lemma \ref{lem:folding} has to be scaled properly.

Set $\rho_1 = 1$ and consider $\lambda_1 > 0$ small enough so that the following inequalities are satisfied for
$\rho = \rho_1$ and $\lambda = \lambda_1$:
\begin{equation}\label{eq:1}
\lambda, \lambda \rho < \delta, \enskip \enskip \enskip \lambda c, \lambda c \rho < \varepsilon,
\end{equation}
where $c$ comes from Lemma \ref{lem:folding}.
Define $s_1 = r / \lambda_1$. Lemma \ref{lem:folding} yields $\rho_2 := \eta(s_1,\rho_1)$ and $\phi_{1}$.
The map $\widehat{\phi}_{1}(x) = \lambda_1 \phi_{1}(x/\lambda_1)$ defines a Hamiltonian symplectomorphism
such that:
\begin{itemize}
\item $\widehat{\phi}_{1}$ embeds $B^2(\lambda_1 \rho_2) \times B^2(\lambda_1 s_1)$ into
$B^2(\lambda_1 \rho_1) \times B^2(\lambda_1)$ so,
the set of inequalities (\ref{eq:1}) implies $B^2(\lambda_1 \rho_2) \times B^2(r)$ is sent inside $P^4(\delta, \delta)$.
\item The support of $\widehat{\phi}_{1}$ is contained in $B^2(\lambda_1 c \rho_1) \times B^2(\lambda_1(s_1 + c))$
which, by (\ref{eq:1}), is in turn contained in $B^2(\varepsilon) \times B^2(r + \varepsilon)$.
\end{itemize}
Setting $\sigma = \lambda_1 \rho_2$ would already prove the lemma for $n = 2$. Let us continue
the argument pursuing the general case.

For $k \ge 2$, define inductively $\rho_k = \eta(s_{k-1}, \rho_{k-1})$,
and $\lambda_k > 0$ smaller than $\lambda_{k-1}$ and such that inequalities (\ref{eq:1})
are satisfied for $\rho = \rho_k$ and $\lambda = \lambda_k$.
Define $\widehat{\phi}_{k} = \lambda_k \phi_{k}(x/\lambda_k)$ and $s_k = r/\lambda_k$.
For every $k \ge 1$, the following properties are satisfied:
\begin{itemize}
\item $\widehat{\phi}_{k}$ embeds $B^2(\lambda_k \rho_{k+1}) \times B^2(r) =
B^2(\lambda_k \eta(s_k, \rho_k)) \times B^2(\lambda_k s_k)$ inside $B^2(\lambda_k \rho_k) \times B^2(\lambda_k)$.
\item The support of $\widehat{\phi}_{k}$ is contained in $B^2(\lambda_k c) \times B^2(\lambda_k(s_k + c))$
so, by (\ref{eq:1})
\begin{equation}\label{eq:2}
\supp(\widehat{\phi}_{k}) \subset B^2(\varepsilon) \times B^2(r + \varepsilon).
\end{equation}
\end{itemize}
For any $1 \le k \le n-1$, $E_k$ denotes the linear subspace of $\R^{2n} = \R^2 \times \ldots \times \R^2$
spanned by the $1^{st}$ and $(n-k+1)^{th}$ factors.
Define $\widehat{\varphi}_{k}$ as the map which acts as $\widehat{\phi}_{k}$ in $E_k$ and as the identity in the other directions.
Evidently, $\widehat{\varphi}_{k}$ is again a Hamiltonian symplectomorphism.
In view of (\ref{eq:2}),
after a suitable cut-off we can obtain another Hamiltonian symplectomorphism $\varphi_k$ which coincides with $\widehat{\varphi}_k$
in $P^{2n}(\delta, r, \ldots, r)$ and is supported on $P^{2n}(\varepsilon, r + \varepsilon, \ldots, r + \varepsilon)$.
Define $\sigma = \lambda_{n-1} \rho_{n}$. Then,
\begin{equation*}
\left.\begin{aligned}
& P^{2n}(\sigma, r, r, \ldots, r) = P^{2n}(\lambda_{n-1}\rho_{n}, \lambda_{n-1}s_{n-1}, r, \ldots, r)
\xhookrightarrow{\varphi_{n-1}}\\
& \xhookrightarrow{\varphi_{n-1}} P^{2n}(\lambda_{n-1} \rho_{n-1}, \lambda_{n-1}, r, \ldots, r)\\
& P^{2n}(\lambda_{n-1} \rho_{n-1}, \lambda_{n-1}, r, \ldots, r)
\subset P^{2n}(\lambda_{n-2} \rho_{n-1}, \delta, r, \ldots, r) \xhookrightarrow{\varphi_{n-2}} \ldots \\
& \qquad \qquad \qquad \qquad \qquad \ldots \xhookrightarrow{\varphi_{2}} P^{2n}(\lambda_2 \rho_2, \delta, \ldots, \delta, r) \subset
P^{2n}(\lambda_1 \rho_2, \delta, \ldots, \delta, r) \\
& P^{2n}(\lambda_1 \rho_2, \delta, \ldots, r) \xhookrightarrow{\varphi_{1}} P^{2n}(\lambda_1 \rho_1, \delta, \ldots, \delta)
\subset P^{2n}(\delta, \ldots, \delta).
\end{aligned}\right.
\end{equation*}
Thus, in order to conclude the lemma it suffices to define
$$\varphi = \varphi_1 \circ \ldots \circ \varphi_{n-1}.$$
\end{proof}

\begin{propositionfolding}
Let $r > 0$, $D = \{0\} \times [-r, r] \times B^2(r) \times \ldots \times B^2(r) \subset \R^{2n}$
and $\varepsilon > 0$. For any $\delta > 0$, there exists a Hamiltonian symplectomorphism $\psi$
with support in
$[-\varepsilon, \varepsilon] \times [-r-\varepsilon, r + \varepsilon] \times B^2(r + \varepsilon) \times
\ldots B^2(r + \varepsilon)$ such that $\psi(D) \subset P^{2n}(\delta, \ldots, \delta)$.
\end{propositionfolding}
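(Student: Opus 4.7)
The plan is to reduce to Lemma \ref{lem:sabana}, which embeds the thin polydisc $P^{2n}(\sigma, r, \ldots, r) = B^2(\sigma) \times B^2(r)^{n-1}$ into $P^{2n}(\delta, \ldots, \delta)$. The obstruction to a direct application is that $D = \{0\} \times [-r, r] \times B^2(r)^{n-1}$ does not lie inside such a thin polydisc when $\sigma < r$: the segment $\{0\} \times [-r, r]$ has diameter $2r$ in the first $\R^2$ factor. So the first step will be to fold this segment into a small disc by a compactly supported symplectomorphism acting on the first $\R^2$, thereby reshaping $D$ into a thin polydisc before feeding it into Lemma \ref{lem:sabana}.

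Fix $\sigma > 0$ supplied by Lemma \ref{lem:sabana} applied with parameters $r$, $\varepsilon/2$ and $\min(\delta, \varepsilon/4)$. The main preliminary task is to build a Hamiltonian symplectomorphism $\psi_0$ of $\R^{2n}$ with support inside $[-\varepsilon/2, \varepsilon/2] \times [-r-\varepsilon/2, r+\varepsilon/2] \times B^2(r+\varepsilon/2)^{n-1}$ such that $\psi_0(D) \subset B^2(\sigma) \times B^2(r)^{n-1}$. The heart of the construction is purely two--dimensional: produce an area-preserving diffeomorphism of $\R^2$, supported in $[-\varepsilon/2, \varepsilon/2] \times [-r-\varepsilon/2, r+\varepsilon/2]$, that maps the zero-area segment $\{0\} \times [-r, r]$ into $B^2(\sigma)$. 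This can be achieved by a 2D symplectic folding in the spirit of Lemma \ref{lem:folding} and Schlenk's construction: thicken the segment to a narrow rectangle of area below $\pi \sigma^2$ and fold it repeatedly into a near-square region inside $B^2(\sigma)$. To extend this 2D map to $\R^{2n}$, I would multiply its generating Hamiltonian by a bump function on the remaining $n-1$ coordinate pairs equal to $1$ on $B^2(r+\varepsilon/4)^{n-1}$; this cutoff does not alter the action on $D$ since the last $n-1$ projections of $D$ lie in $B^2(r)^{n-1}$.

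Then apply Lemma \ref{lem:sabana} to obtain a Hamiltonian $\psi_1$ supported in $P^{2n}(\varepsilon/2, r+\varepsilon/2, \ldots, r+\varepsilon/2)$ embedding $P^{2n}(\sigma, r, \ldots, r)$ into $P^{2n}(\delta, \ldots, \delta)$. The composition $\psi = \psi_1 \circ \psi_0$ satisfies $\psi(D) \subset P^{2n}(\delta, \ldots, \delta)$, and a support calculation (using that $\psi_0$ is the identity outside its support and preserves the bounding cuboid) shows that $\supp(\psi) \subset [-\varepsilon, \varepsilon] \times [-r-\varepsilon, r+\varepsilon] \times B^2(r+\varepsilon)^{n-1}$, as required.

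The main obstacle is the 2D folding step defining $\psi_0$: while the principle that a zero-area arc can be freely reshaped by area-preserving maps is classical, realizing it explicitly with support in a prescribed thin rectangle requires folding bookkeeping analogous to the proof of Lemma \ref{lem:folding}. Once this ingredient is in place, the rest of the argument is a direct application of Lemma \ref{lem:sabana} plus tracking the supports through the composition.
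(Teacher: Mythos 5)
Your overall architecture coincides with the paper's: first move $D$ into the thin polydisk $P^{2n}(\sigma,r,\ldots,r)$ with a map supported near $D$, then invoke Lemma \ref{lem:sabana} and track supports. The difference, and the problem, lies entirely in the first step, which is the only genuinely new ingredient of your write-up and which you explicitly leave unproven (``the main obstacle''). As written, the proof therefore has a hole exactly where it needs an argument: you assert the existence of an area-preserving map of $\R^2$, supported in the thin rectangle $[-\varepsilon/2,\varepsilon/2]\times[-r-\varepsilon/2,r+\varepsilon/2]$, folding the segment $\{0\}\times[-r,r]$ into $B^2(\sigma)$, but you do not construct it. The claim is true --- in dimension $2$ there is no obstruction beyond area, and a rectangle of area less than $\pi\sigma^2$ can be snaked into $B^2(\sigma)$ inside the corridor --- but establishing it with the prescribed support requires precisely the kind of folding bookkeeping you defer, so the step cannot be waved through.

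The paper avoids all of this with a one-line observation that makes the folding unnecessary: since $D$ lies in the hyperplane $\{x_1=0\}$, the quadratic Hamiltonian $H(\x,\y)=-x_1y_1$ generates the linear hyperbolic flow $(x_1,y_1)\mapsto(e^{t}x_1,e^{-t}y_1)$ (in the appropriate sign convention), which preserves $\{x_1=0\}$ and simply contracts the $y_1$--axis; hence for large $t$ it carries $\{0\}\times[-r,r]\times A$ onto $\{0\}\times[-\sigma,\sigma]\times A$ for any $A^{2n-2}$. Because trajectories of points of $D$ never leave $\{0\}\times[-r,r]\times B^2(r)\times\ldots\times B^2(r)$, a cut-off of $H$ supported in $[-\varepsilon,\varepsilon]\times[-r-\varepsilon,r+\varepsilon]\times B^2(r+\varepsilon)^{n-1}$ acts on $D$ exactly as the linear flow does. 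No area accounting or folding is needed at this stage; the symplectic folding is confined to Lemmas \ref{lem:folding} and \ref{lem:sabana}, where it already lives. I recommend replacing your $\psi_0$ by this linear contraction; with that substitution your composition and support bookkeeping go through and reproduce the paper's proof.
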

\begin{proof}
Fix $\sigma$ from the previous lemma.
The Hamiltonian $H(\x, \y) = -x_1 y_1$ induces a flow in $\R^{2n}$ which carries $\{0\} \times [-r, r] \times A^{2n-2}$ onto
$\{0\} \times [-\sigma, \sigma] \times A^{2n-2}$, for arbitrary $A^{2n-2}$.
Applying an appropriate cut-off to $H$ we can assume the flow is supported in
$[-\varepsilon, \varepsilon] \times [-r-\varepsilon, r + \varepsilon]  \times B^2(r+ \varepsilon) \times \ldots \times B^2(r+ \varepsilon)$.
It is enough to compose the time--$t$ map of the flow, for large $t > 0$,
with $\varphi$ from Lemma \ref{lem:sabana} to obtain the desired map.
\end{proof}


\begin{thebibliography}{XXXX}
\bibitem[AK70]{anosovkatok}
D.V. Anosov, A.B. Katok, \emph{New examples in smooth ergodic theory. Ergodic diffeomorphisms},
Trans. Moscow Math. Soc. 23 (1970), 1--35.

\bibitem[Ar69]{arnold}
V.I. Arnold, \emph{Mathematical methods of classical mechanics}, Graduate Texts in Mathematics 60, Springer-Verlag, New York,
1989.

\bibitem[Ba97]{banyaga}
A. Banyaga, \emph{The Structure of Classical Diffeomorphisms Groups}, Mathematics and its Applications 400,  Kluwer Academic Publishers Group, Dordrecht, 1997.

\bibitem[Bo69]{boothby}
W. Boothby, \emph{Transitivity of the automorphisms of certain geometric structures}, Trans. Amer. Math. Soc. 137 (1969) 93--100.

\bibitem[BW58]{boothbywang}
W. Boothby, H. Wang, \emph{On contact manifolds}, Ann. of Math 68 (1958), 721--734.

\bibitem[CP16]{casalspresas}
R. Casals, F. Presas, \emph{On the strong orderability of overtwisted 3--folds},
Comment. Math. Helv. 91 (2016), no. 2, 305--316.

\bibitem[CN10]{chernov}
V. Chernov, S. Nemirovski, \emph{Non-negative Legendrian isotopy in $ST^*M$}, Geom. Topol. 14 (2010), 611--626.

\bibitem[CFP]{colin}
V. Colin, E. Ferrand, P. Pushkar, \emph{Positive isotopies of Legendrian submanifolds}, preprint.

\bibitem[EN02]{eliashberg}
Y. Eliashberg, N. Mischachev, \emph{Introduction to the h--principle}, Grad. Stud. Math. 48, Amer. Math. Soc., Providence, RI, 2002.

\bibitem[FH77]{fathiherman}
A. Fathi, M. Herman, \emph{Existence de diff�omorphismes minimaux}, from: ``Dynamical
systems, Vol. I'', Ast\'{e}risque 49, Soc. Math. France, Paris (1977) 37--59.

\bibitem[FK04]{fayadkatok}
B. Fayad, A.B. Katok, \emph{Constructions in elliptic dynamics}, Ergodic Theory Dynam. Systems 24 (2004), no. 4, 1477--1520.

\bibitem[FGM91]{marisafdez}
M. Fernandez, A. Gray, J. Morgan,
\emph{Compact symplectic manifolds with free circle actions, and Massey products}, Michigan Math. J. 38 (1991),
271--283.

\bibitem[GG15]{ginzburggurel}
V. Ginzburg, B. G\"{u}rel, \emph{The Conley conjecture and beyond}, Arnold Math J. 1 (2015), 299--337.


\bibitem[HZ87]{hoferzehnder} H. Hofer and E. Zehnder. \emph{Periodic solutions on hypersurfaces and a result by C.Viterbo.} Invent. Math. 90 (1987), 1--9.

\bibitem[K72]{katokrussian}
A. Katok, \emph{Minimal diffeomorphisms on principal $\mathbb{S}^1$--bundles} (Russian), abstracts
of 6 All-Union Topological Conference, Tbilisi, 1972, 63.

\bibitem[Sa12]{sandon}
S. Sandon, Sheila, \emph{On iterated translated points for contactomorphisms of $\R^{2n+1}$ and $\R^{2n}\times \mathbb{S}^1$},
Internat. J. Math. 23 (2012), no. 2, 14 pp.

\bibitem[Sc05]{schlenk}
F. Schlenk, \emph{Embedding problems in symplectic geometry}, de Gruyter Expositions in Mathematics, 40. Walter de Gruyter, Berlin, 2005.

\bibitem[T76]{thomas}
C. Thomas, \emph{Almost regular contact manifolds}, J. Differential Geometry 11 (1976), 521--533.

\bibitem[W82]{walters}
P. Walters, \emph{An introduction to ergodic theory}. Graduate Texts in Mathematics, vol. 79.
Springer, Berlin, 1982.

\end{thebibliography}
\end{document}